\title[Vector bundles over Klein surfaces]{Moduli spaces of vector bundles over\\ a Klein surface}
\author{Florent Schaffhauser}
\curraddr{Department of Mathematics, University of Los Andes, Bogota, Colombia.}
\address{IHES, 35 route de Chartres, 91440 Bures-sur-Yvette, France.}
\email{florent@ihes.fr}
\subjclass[2000]{53D30, 53D12, 30F50}
\keywords{Moduli spaces, Lagrangian submanifolds, Klein surfaces}
\newtheorem{theorem}{Theorem}[section]
\newtheorem{lemma}[theorem]{Lemma}
\newtheorem{proposition}[theorem]{Proposition}
\newtheorem{corollary}[theorem]{Corollary}
\newtheorem{definition}[theorem]{Definition}
\newtheorem*{thm*}{Theorem}
\theoremstyle{definition}
\newtheorem*{ack}{Acknowledgments}
\newcommand{\C}{\mathbb{C}}
\newcommand{\R}{\mathbb{R}}
\newcommand{\Z}{\mathbb{Z}}
\newcommand{\Vect}{\mathrm{Vect}}
\newcommand{\Mod}{\mathcal{M}^{\, r,d}_{(M,\sigma)}}
\newcommand{\sectionsofE}{\Omega^0(M;E)}
\newcommand{\oneformsofE}{\Omega^1(M;E)}
\newcommand{\twoformsofE}{\Omega^2(M;E)}
\newcommand{\kformsofE}{\Omega^k(M;E)}
\newcommand{\oneformsEzero}{\Omega^1(M;E)}
\newcommand{\zerooneforms}{\Omega^{0,1}(M;E)}
\newcommand{\zerooneendom}{\Omega^{0,1}(M;\mathfrak{gl}(E))}
\newcommand{\functions}{\Omega^0(M;\mathbb{C})}
\newcommand{\zeroonematrices}{\Omega^{0,1}(U_{\tau};\mathfrak{gl}_r(\C))}
\newcommand{\Ut}{U_{\tau}}
\newcommand{\Utp}{U_{\tau'}}
\newcommand{\Uttp}{U_{\tau} \cap U_{\tau'}}
\newcommand{\Vt}{V_{\tau}}
\newcommand{\Vtp}{V_{\tau'}}
\newcommand{\Vttp}{V_{\tau} \cap V_{\tau'}}
\newcommand{\Wt}{W_{\tau}}
\newcommand{\Wtp}{W_{\tau'}}
\newcommand{\pt}{\varphi_{\tau}}
\newcommand{\ptp}{\varphi_{\tau'}}
\newcommand{\pttp}{\varphi_{\tau}\circ\varphi_{\tau'}^{-1}}
\newcommand{\zt}{z_{\tau}}
\newcommand{\ztp}{z_{\tau'}}
\newcommand{\zttp}{z_{\tau}\circ z_{\tau'}^{-1}}
\newcommand{\http}{h_{\tau\tau'}}
\newcommand{\gttp}{g_{\tau\tau'}}
\newcommand{\psittp}{\psi_{\tau}\circ\psi_{\tau'}^{-1}}
\newcommand{\Bt}{B_{\tau}}
\newcommand{\Btp}{B_{\tau'}}
\newcommand{\Lt}{\lambda_{\tau}}
\newcommand{\At}{A_{\tau}}
\newcommand{\Atp}{A_{\tau'}}
\newcommand{\ut}{u_{\tau}}
\newcommand{\utp}{u_{\tau'}}
\newcommand{\antiHermonematrices}{\Omega^1(U_{\tau};\mathfrak{u}_r)}
\newcommand{\st}{s_{\tau}}
\newcommand{\w}{\omega}
\newcommand{\GL}{\mathrm{GL}}
\newcommand{\gl}{\mathfrak{gl}}
\newcommand{\fraku}{\mathfrak{u}}
\newcommand{\G}{\mathcal{G}}
\newcommand{\sig}{\sigma}
\newcommand{\sigt}{\widetilde{\sigma}}
\newcommand{\calL}{\mathcal{L}^{r,d}_{\sigt}}
\newcommand{\calN}{\mathcal{N}^{r,d}_{\sigma}}
\newcommand{\bs}{\beta_{\sigma}}
\newcommand{\calH}{\mathcal{H}}
\newcommand{\calF}{\mathcal{F}}
\newcommand{\calD}{\mathcal{D}}
\newcommand{\ov}[1]{\overline{#1}}
\newcommand{\os}[1]{\overline{\sigma^*#1}}
\newcommand{\db}{\overline{\partial}}
\newcommand{\del}{\partial}
\newcommand{\ttp}{\tau\tau'}
\newcommand{\conn}{\mathcal{A}(E,h)}
\newcommand{\psit}{\psi_{\tau}}
\newcommand{\quot}{\mathcal{A}(E,h)/\negthickspace /_{*i2\pi\frac{d}{r} \mathrm{\Id}_E} \mathcal{G}_{(E,h)}}
\newcommand{\unitarygaugegp}{\G_{(E,h)}}
\newcommand{\cxgaugegp}{\G^{\, \C}_{(E,h)}}
\newcommand{\sections}{\Omega^0(M;\mathfrak{u}(E,h))}
\newcommand{\oneforms}{\Omega^1(M;\mathfrak{u}(E,h))}
\newcommand{\twoforms}{\Omega^2(M;\mathfrak{u}(E,h))}
\newcommand{\tmu}{i2\pi \mu\, \mathrm{\Id}_{E}}
\newcommand{\antiHermsections}{\Omega^0(M;\mathfrak{u}(E,h))}
\newcommand{\antiHermoneforms}{\Omega^1(M;\mathfrak{u}(E,h))}
\newcommand{\antiHermtwoforms}{\Omega^2(M;\mathfrak{u}(E,h))}
\newcommand{\antiHermendom}{\mathfrak{u}(E,h)}
\newcommand{\Aut}{\mathrm{Aut}}
\newcommand{\End}{\mathrm{End}}
\newcommand{\Id}{\mathrm{Id}}
\newcommand{\tr}{\mathrm{tr}}
\newcommand{\U}{\mathbf{U}}
\newcommand{\Ad}{\mathrm{Ad}}
\newcommand{\Fix}{\mathrm{Fix}}
\newcommand{\asigt}{\alpha_{\sigt}}
\newcommand{\vol}{\mathrm{vol}}
\newcommand{\Dol}{\mathrm{Dol}}
\newcommand{\degr}{\mathrm{deg}}
\newcommand{\rk}{\mathrm{rk}}
\newcommand{\gr}{\mathrm{gr}}
\newcommand{\fibre}{F^{-1}\big(\{*i2\pi\frac{d}{r}\, \mathrm{Id}_E\}\big)}
\newcommand{\realgaugegp}{\unitarygaugegp^{\sigt}}
\newcommand{\Ms}{M^{\sigma}}
\newcommand{\Es}{E^{\sigt}}
\newcommand{\calC}{\mathcal{C}}
\renewcommand{\phi}{\varphi}
\renewcommand{\mod}{\mathrm{mod\ }}
\newcommand{\calE}{\mathcal{E}}
\begin{document}

\begin{abstract}
A compact topological surface $S$, possibly non-orientable and with non-empty boundary, always admits a Klein surface structure (an atlas whose transition maps are dianalytic). Its complex cover is, by definition, a compact Riemann surface $M$ endowed with an anti-holomorphic involution which determines topologically the original surface $S$. In this paper, we compare dianalytic vector bundles over $S$ and holomorphic vector bundles over $M$, devoting special attention to the implications that this has for moduli varieties of semistable vector bundles over $M$. We construct, starting from $S$, totally real, totally geodesic, Lagrangian submanifolds of moduli varieties of semistable vector bundles of fixed rank and degree over $M$. This relates the present work to the constructions of Ho and Liu over non-orientable compact surfaces with empty boundary (\cite{Ho-Liu_YM}).
\end{abstract}

\maketitle

\begin{center}
\textit{Dedicated to the memory of Paulette Libermann (1919-2007).}
\end{center}

Let $(M,\sigma)$ be a compact Riemann surface endowed with an anti-holomorphic involution, and let $\Mod$ be the moduli variety of semistable holomorphic vector bundles of rank $r$ and degree $d$ over $M$.
The goal of the paper is to show that the set $$\calN:=\{[\gr(\calE)]\in\Mod : \calE\ \mathrm{is\ semistable\ and\ real}\}$$ of moduli of real semistable bundles of rank $r$ and degree $d$ is a totally real, totally geodesic, Lagrangian submanifold of $\Mod$. By a real vector bundle over $M$, we mean a holomorphic vector bundle $\calE$ over $M$ which is a \textit{real bundle over} $(M,\sigma)$ in the sense of Atiyah (see \cite{Atiyah_real_bundles} and definition \ref{def_real_bundle}). We denote $\vol_M$ the unique volume form such that $\int_M \vol_M =1$ in the canonical orientation of the Riemann surface $M$, and we assume throughout that $\sigma$ is an isometry satisfying $\sig^*\vol_M=-\vol_M$.\\ Moduli varieties of semistable vector bundles over orientable surfaces with empty boundary are central objects of attention in algebraic geometry, and the study of analogous spaces for other types of topological surfaces (non-orientable or with non-empty boundary) has attracted the attention of many in recent years, with an emphasis on the case of non-orientable surface with empty boundary (see \cite{Ho-Liu_YM}, \cite{Baird}, \cite{Ramras} for bundles over non-orientable surfaces with empty boundary, and \cite{Sav-Wang} for bundles over a sphere with a finite number of open disks removed). In \cite{Ho-Liu_YM}, Ho and Liu gave a complete theory for the case of flat bundles over a non-orientable surface with empty boundary. As we shall see shortly, their point of view is similar to ours, in that they think of a bundle over a non-orientable surface (with empty boundary) as a bundle over the orientation cover of that surface with an additional $\Z/2\Z$ symmetry. In the present work, systematic use is made of a fixed Klein surface structure on a given compact surface, which enables us, replacing the orientation cover with the complex cover, to handle the case of non-orientable surfaces with non-empty boundary (the orientation cover of a surface with boundary still has a boundary, whereas its complex cover does not, so the Atiyah-Bott theory can always be applied to the complex cover). The use of complex covers also produces involutions of the various moduli varieties that we consider slightly different from the ones of Ho and Liu. On the set of isomorphism classes of smooth complex vector bundles for instance, what we are doing amounts to replacing $E\mapsto \sigma^*E$ with $E\mapsto \os{E}$. This apparently innocuous change has the merit of giving an involution of each of the moduli varieties $\Mod$, even when $d\not=0$, in contrast to what has appeared so far.\\ The gist of the construction is as follows. We shall prove that the set $\calN$ of moduli of real semistable bundles of rank $r$ and degree $d$ is a union of connected components of the fixed-point set of an involution $\bs$ of $\Mod$. To see that $\calN$ is a totally real, totally geodesic, Lagrangian submanifold of $\Mod$, it then suffices to prove that $\bs$ is an anti-symplectic isometry. In section \ref{intro}, motivation is drawn from the fact that $\calN$ may be thought of as a moduli space of dianalytic bundles over the Klein surface $S:=M/\sigma$. This explains the role played by involutions in the paper. To actually define the involution $\bs$ and perform the practical computations, we recall in section \ref{moduli_of_ss_bundles} the presentation of $\Mod$ as a K\"ahler quotient, which is based on Donaldson's characterisation of stable bundles in terms of unitary connections. It is then a simple matter to prove the things that we have asserted in this introduction. The main result of the paper is theorem \ref{lag_submanifold}, stated at the end of the last section.\\ As a final comment before entering the subject, we should mention that, when we speak of a Lagrangian submanifold of $\Mod$, one should understand the following. When $r\geq 1$ and $d\in \Z$ are coprime, every semistable bundle is in fact stable and $\Mod$ is a smooth, projective variety over $\C$, so it makes sense to speak of a Lagrangian submanifold of $\Mod$. When $r$ and $d $ are not coprime, which includes the important case $d=0$, there are two ways of thinking about theorem \ref{lag_submanifold}. Either by restricting one's attention to the moduli variety of stable bundles, which is a smooth, quasi-projective, complex variety and also a K\"ahler quotient, and then the theorem remains true provided one replaces the word semistable with the word stable, with the same proof. Or by recalling that when $\Mod$ is not smooth, it has a statified symplectic structure in the sense of Lerman and Sjamaar (\cite{LS}), and then $\calN$ is seen to be a stratified subspace of that moduli space, whose intersection with a given symplectic stratum is Lagrangian. The same goes for the property of being totally real, or totally geodesic (it is a stratum by stratum statement). Finally, let us mention that, in the paper, when we need to distinguish between a holomorphic vector bundle and its underlying smooth complex vector bundle, we denote the former by $\calE$ and the latter by $E$.

\section{Motivation: dianalytic vector bundles over a Klein surface}\label{intro}

We start by recalling the fundamentals of Klein surfaces (\textit{e.g.} \cite{AG}). A function $f:U\to\C$ defined on an open subset of $\C$ is called \textbf{dianalytic in} $U$ if it is either holomorphic or anti-holomorphic (meaning respectively that $\db{f}=0$ and that $\del{f}=0$) on any given connected component of $U$. Denote $\ov{\calH}=\{z\in\C\ |\ Im\, z \geq 0\}$ the closed upper half-plane of $\C$, with the induced topology, and denote $\calH$ the open upper half-plane. Let $U$ be an open subset of $\ov{\calH}$. A function $f:U\to\C$ is called \textbf{dianalytic on} $U$ if it is continuous on $U$, dianalytic in $U\cap \calH$ and satisfies $f(U\cap\R)\subset \R$. We can consider the restrictions of such functions to any open subset $V$ of $\ov{\calH}$ such that $V\subset U$ and this defines a sheaf of functions on any open subset $U\subset\ov{\calH}$, denoted $\calD_U$, called the sheaf of dianalytic functions on $U$. A \textbf{Klein surface} is a topological space $S$ together with a subsheaf of the sheaf of continuous functions making $S$ locally isomorphic as a ringed space to a space $(U,\calD_U)$ for some open subset $U$ of $\ov{\calH}$. In other words, $S$ is a topological surface (possibly non-orientable and with non-empty boundary) which admits a covering by open sets homeomorphic to open subsets of $\ov{\calH}$ and such that the associated transition maps are dianalytic. A homomorphism between two Klein surfaces $S_1$ and $S_2$ is a continuous mapping $f:S_1\to S_2$ which is dianalytic in local charts (in particular, it sends the boundary of $S_1$ to the boundary of $S_2$). One may observe that the topological requirement that the boundary of $S_1$ should be sent to the boundary of $S_2$ is the reason for the condition $f(U\cap\R)\subset \R$ in the definition of a dianalytic function on an open subset $U$ of $\ov{\calH}$.\\ When the open set $U\subset\ov{\calH}$ is connected, then, by the Schwarz reflection principle, the functions $f\in\calD_{\ov{\calH}}(U)$ are in bijective correspondence with the holomorphic functions $F$ defined on the symmetric open set $U\cup\ov{U}$ of $\C$ satisfying $F|_{U}=f$ and $F(\ov{z})=\ov{F(z)}$ for all $z\in U\cup\ov{U}$ (in particular, $F$ sends $U\cap\R$ to $\R$). Observe that $U\cup\ov{U}$ is connected if, and only if, $U\cap\R$ is non-empty, and that this is the case where we actually need the Schwarz reflection principle. This property is the basis for the existence of the \textbf{complex cover} of a Klein surface $S$, which, by definition, is a Riemann surface $M$ together with an anti-holomorphic $\sigma$ and a map $p:M\to S$ inducing a homeomorphism from $M/\sigma$ to $S$ (in fact, an isomorphism of Klein surfaces, for the natural Klein structure on $M/\sigma$). If $(M',\sigma',p')$ is another complex cover of $S$, then there is a biholomorphic bijection $f:M\to M'$ such that $\sigma'= f\sigma f^{-1}$ (see \cite{AG}, section 1.6), which is why we shall henceforth speak of \textit{the} complex cover of $S$. An important point for us in the study of vector bundles on compact topological surfaces \textit{which do not admit a Riemann surface structure} is that any compact topological surface (possibly non-orientable and with non-empty boundary) may be endowed with a Klein surface structure (see \cite{AG}, section 1.5). We now briefly recall the construction of the complex cover of a Klein surface, as it is instructive in order to, later, pull back dianalytic bundles over $S$ to holomorphic bundles over $M$. Let $(\Ut,\pt:\Ut\overset{\simeq}{\longrightarrow} \Vt\subset \ov{\calH})_{\tau\in T}$ be a dianalytic atlas of $S$ and form the open sets $\Wt = \Vt \cup \ov{\Vt} \subset \C$ and the topological space $\Omega:=\bigsqcup_{\tau\in T} \Wt$, with the final topology (each $\Wt$ is open in $\Omega$). For simplicity, we assume that $\Ut\cap\Utp$ is connected for all $\tau,\tau'$. As $(\Ut,\pt)_{\tau\in T}$ is a dianalytic atlas, the transition map $\pt\circ\ptp^{-1}$ from $\Vtp$ to $\Vt$ is either holomorphic or anti-holomorphic. We first treat the case where it is anti-holomorphic. In this case, we glue $\ptp(\Uttp) \subset \Vtp$ to $\ov{\pt}(\Uttp)\subset \ov{\Vt}$, and $\ov{\ptp}(\Uttp) \subset \ov{\Vtp}$ to $\pt(\Uttp) \subset \Vt$.

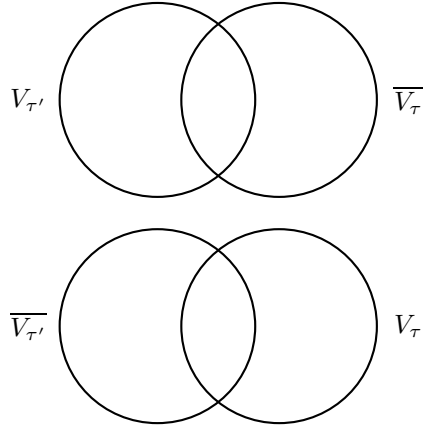
\begin{figure}[ht]
\centerline{
\begin{pspicture}(-3,-3)(3,3)
\pscircle(-0.8,1.5){1.3}
\rput(-2.5,1.5){$V_{\tau'}$}
\pscircle(0.8,1.5){1.3}
\rput(2.5,1.5){$\overline{V_{\tau}}$}
\pscircle(-0.8,-1.5){1.3}
\rput(-2.5,-1.5){$\overline{V_{\tau'}}$}
\pscircle(0.8,-1.5){1.3}
\rput(2.5,-1.5){$V_{\tau}$}
\end{pspicture}
}
\caption{The open set $[W_{\tau}]\cup[W_{\tau'}]$ of $M$ when $\varphi_{\tau}\circ\varphi_{\tau'}^{-1}$ is anti-holomorphic.}
\end{figure}
\noindent Had $\pttp$ been holomorphic, we would have glued $\ptp(\Uttp)\subset\Vtp$ to $\pt(\Uttp)\subset\Vt$ and $\ov{\ptp}(\Uttp)\subset\ov{\Vtp}$ to $\ov{\pt}(\Uttp)\subset\ov{\Vt}$. This defines an equivalence relation $\mathcal{R}$ on $\Omega$ and we set $M:=\Omega/\mathcal{R}$, with the quotient topology. Let $[\Wt]$ denote the image of $\Wt\subset\Omega$ under the canonical projection and consider the map $$\zt: \begin{array}{rcl} [\Wt] & \longrightarrow & \Wt\\ \lbrack x \rbrack & \longmapsto & x\end{array}.$$ Each $\zt$ is a homeomorphism and the transition maps $\zttp$ are holomorphic. Indeed, if for instance $\pttp$ is anti-holomorphic, then, by definition of the gluing on $\Omega$, the map $\zttp:\ztp([\Wt]\cap[\Wtp]) \to \zt([\Wt]\cap [\Wtp])$ is the map $$v\longmapsto \left\{\begin{array}{rcl}  \ov{\pttp}(v) & \mathrm{if} & v\in\Vtp,\\ \pttp(\ov{v}) & \mathrm{if} & v\in\ov{\Vtp}\end{array},\right.$$ and this map is holomorphic (this requires the use of the Schwarz reflection principle precisely when $\Vttp$ intersects $\R$ in $\ov{\calH}$). The case where $\pttp$ is holomorphic is similar. So we have a holomorphic atlas $([\Wt],\zt)_{\tau \in T}$ on $M$. The anti-holomorphic involution $\sigma$ on $M$ is just complex conjugation in the local charts $([\Wt],\zt)$ and the projection $p:M\to S$ takes $[x]=[a+ib]\in[\Wt]$ to $a+i|b|\in\Vt$.\\ We now consider a fixed Klein surface $S$ and denote $(M,\sigma,p)$ its complex cover. As earlier, we denote $(\Ut,\pt)_{\tau\in T}$ a dianalytic atlas of $S$ and set $\Vt=\pt(\Ut)\subset\ov{\calH}$.
\begin{definition}[Dianalytic vector bundle]
A \textbf{dianalytic vector bundle} $Q$ over $S$ is a topological complex vector bundle for which there exists a family $(\Ut,\psit)_{\tau\in T}$ of local trivialisations whose associated $1$-cocycle of transition maps, denoted $$\big(\http: \Uttp \to \GL_r(\C)\big)_{\tau,\tau'},$$ is dianalytic. In particular, the map $\http$ sends $(\Uttp)\cap\partial{S}$ to $\GL_r(\R)$.
\end{definition}
\noindent Saying that the map $\http:\Uttp \to \GL_r(\C)\subset\gl_r(\C)$ is dianalytic means, by definition, that its $r^2$ components are either simultaneously holomorphic or simultaneously anti-holomorphic. This implies that a dianalytic bundle is in particular a dianalytic manifold. Accordingly, we define a homomorphism between two dianalytic bundles $Q$ and $Q'$ over $S$ to be a homomorphism $f:Q\to Q'$ of topological complex vector bundles which is dianalytic in local charts. To clarify the notation, let us mention that we think of $\http$ as the transition map from $\Utp$ to $\Ut$, associated to the map $\psittp$ by the relation $$\psittp(x,\eta) = \big(x,\http(x).\eta\big)$$ for all $(x,\eta)\in(\Uttp)\times\C^r$. We shall now show that the pulled back bundle $(p^*Q\to M)$ is a holomorphic bundle. First, a definition. If $(E\to M)$ is any complex vector bundle over a topological space $M$, the vector bundle $(\ov{E}\to M)$ is defined to be the complex vector bundle over $M$ whose fibres are the fibres of $E$ with the complex structure defined by multiplication by $-i$ (called the conjugate complex structure). Alternatively, if $(\gttp)_{\tau,\tau'}$ is a \textit{smooth} $1$-cocycle representing $E$, then $\ov{E}$ is represented by $(\ov{\gttp})_{\tau,\tau'}$. The choice of a Hermitian metric on $E$ gives a \textit{complex linear} isomorphism $\ov{E} \simeq E^*$. In particular, $\degr(\ov{E}) = -\degr(E)$. We now proceed with the construction of $p^*Q$. Consider the local charts $Q|_{\Ut}\simeq \Vt \times \C^r$ of $Q$, and form the product bundles $\Vt \times \C^r$ and $\ov{\Vt} \times \C^r$. The transition maps of $Q$ are the maps $$\begin{array}{rcl} \ptp(\Uttp) \times \C^r & \longrightarrow & \pt(\Uttp)\times\C^r \\ (v,\eta) & \longmapsto & \big(\pttp(v), \http\big(\ptp^{-1}(v)\big).\eta\big)\end{array}.$$ By definition of a cocycle of transition maps, the dianalyticity requirement on $\http$ means that $\pttp$ and $\http\circ\ptp^{-1}$ are either simultaneously holomorphic or simultaneously anti-holomorphic. Let us first consider the case where they are both anti-holomorphic. Then, as in the construction of the complex cover of $S$, we set, for $(w,\eta)\in \ztp([\Wt]\cap[\Wtp])\times\C^r$, $$\big(w,\eta\big) \sim \left\{\begin{array}{rcl} \big(\ov{\pttp}(w),\ \ov{h_{\tau\tau'}\circ\ptp^{-1}}(w).\eta\big) & \mathrm{if} & w\in\Vtp, \\ \big(\pttp(\ov{w}),\ h_{\tau\tau'}\circ\ptp^{-1}(\ov{w}).\eta\big) & \mathrm{if} & w\in\ov{\Vtp}\end{array}.\right.$$ Had $\pttp$ and $\http\circ\ptp^{-1}$ been holomorphic, we would have set $$\big(w,\eta\big) \sim \left\{\begin{array}{rcl} \big(\pttp(w),\ h_{\tau\tau'}\circ\ptp^{-1}(w).\eta\big) & \mathrm{if} & w\in\Vtp, \\ \big(\ov{\pttp}(\ov{w}),\ \ov{h_{\tau\tau'}\circ\ptp^{-1}}(\ov{w}).\eta\big) & \mathrm{if} & w\in\ov{\Vtp}\end{array}.\right.$$ Note that the elements on the right are elements of $\zt([\Wt]\cap[\Wtp])\times\C^r$. This defines an equivalence relation on the bundle $\sqcup_{\tau\in T} ([\Wt]\times\C^r) \to \sqcup_{\tau\in T} [\Wt]$, therefore a topological complex vector bundle $E$ with typical fibre $\C^r$ over $M$ (the complex cover of $S$) with a $1$-cocycle of transition maps $$\big(\gttp:[\Wt]\cap[\Wtp] \longrightarrow \GL_r(\C)\big)_{\tau,\tau'}$$ satisfying $$\gttp\circ\ztp^{-1} (w)= \left\{ \begin{array}{rcl} \ov{\http\circ\ptp^{-1}}(w) & \mathrm{if} & w\in \Vtp \\ \http\circ\ptp^{-1} (\ov{w}) & \mathrm{if} & w\in\ov{\Vtp} \end{array}\right.$$ when $\pttp$ and $\http\circ\ptp^{-1}$ are anti-holomorphic, and $$\gttp\circ\ztp^{-1}(w) = \left\{ \begin{array}{rcl} \http\circ\ptp^{-1}(w) & \mathrm{if} & w\in \Vt \\ \ov{\http\circ\ptp^{-1}} (\ov{w}) & \mathrm{if} & w\in\ov{\Vtp} \end{array}\right.$$ when $\pttp$ and $\http\circ\ptp^{-1}$ are holomorphic. In particular, $\gttp\circ\ztp^{-1}$ is always a holomorphic map (this requires the use of the Schwarz reflection principle precisely when $\Vttp$ intersects $\R$ in $\ov{\calH}$). So $(\gttp)_{\tau,\tau'}$ is a holomorphic cocycle for the holomorphic atlas $([\Wt],\zt)_{\tau\in T}$ of $M$ and the complex vector bundle it defines is indeed holomorphic. By construction, the bundle $\calE$ thus obtained is no other than $p^*Q$, where $p:M\to S$ is the (ramified) covering map between $M$ and $S$. The construction also shows that we have a commutative diagram \begin{equation*}\begin{CD} [\Wt]\times\C^r @>\widetilde{\sigma}>> [\Wt]\times\C^r \\ @VVV @VVV \\ [\Wt] @>\sigma>> [\Wt]=\sigma([\Wt]), \end{CD}\end{equation*} where $\widetilde{\sigma}$ is fibrewise $\eta\mapsto \ov{\eta}$ and covers $\sigma$. This means that $\calE$ has a global involution $\widetilde{\sigma}:\calE\to \calE$ covering $\sigma$ and which is $\C$-antilinear in the fibres, so $\calE=p^*Q$ is a \textbf{real bundle} over the \textbf{real space} $(M,\sigma)$ in the sense of Atiyah (\cite{Atiyah_real_bundles}).
\begin{definition}[Real bundle over $M$]\label{def_real_bundle}
Let $M$ be a Riemann surface and let $\sigma$ be an anti-holomorphic involution of $M$. A \textbf{real holomorphic vector bundle} $\calE\to M$ is a holomorphic vector bundle, together with an involution $\widetilde{\sigma}$ of $\calE$ making the diagram \begin{equation*}\begin{CD} \calE @>\widetilde{\sigma}>> \calE \\ @VVV @VVV \\ M @>\sigma>> M \end{CD}\end{equation*} commutative, and such that, for all $x\in M$, the map $\widetilde{\sigma}|_{\calE_x}: \calE_x \to \calE_{\sigma(x)}$ is $\C$-antilinear: $$\widetilde{\sigma}(\lambda\cdot\eta)=\ov{\lambda}\cdot\widetilde{\sigma}(\eta),\ \mathrm{for\ all}\ \lambda\in\C\ \mathrm{and\ all}\ \eta\in E_x.$$ A homomorphism between two real bundles $(\calE,\widetilde{\sigma})$ and $(\calE',\widetilde{\sigma}')$ is a homomorphism $$f:\calE\to \calE'$$ of holomorphic vector bundles over $M$ such that $f\circ\widetilde{\sigma} = \widetilde{\sigma}' \circ f$.
\end{definition}
\noindent We shall often call $\sigt$ the \textbf{real structure} of $\calE$. If clear from the context, it is convenient, following the convention in \cite{Atiyah_real_bundles}, to simply write $x\mapsto \ov{x}$ for $\sig$, and $v\mapsto\ov{v}$ for $\sigt$. There are similar notions of real structures for topological, smooth, and Hermitian complex vector bundles. For Hermitian bundles for instance, one requires that $\sigt$ should be a $\C$-antilinear, involutive isometry which covers $\sigma$. A \textbf{real section} of a real bundle $\calE$ over $M$ is a section $s$ of $\calE$ satisfying $s(\ov{x})=\ov{s(x)}$ for all $x\in M$. By the Schwarz reflection principle, the real sections of the real bundle $p^*Q$ constructed above are in one-to-one correspondence with the dianalytic sections of $Q$. This implies that, if two dianalytic bundles $Q$ and $Q'$ over $S$ satisfy $p^*Q\simeq p^*Q'$ as real vector bundles over $M$, then $Q\simeq Q'$ as dianalytic vector bundles over $S=M/\sigma$. We sum up this discussion with the following lemma.
\begin{lemma}
Let $S$ be a Klein surface and let $(M,\sigma,p)$ be its complex cover. Then the pulling back functor $p^*$ is an equivalence of categories between the category of dianalytic vector bundles over $S=M/\sig$, and the category of  real vector bundles over $M$.
\end{lemma}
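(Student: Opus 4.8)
The plan is to establish that $p^*$ is a functor which is fully faithful and essentially surjective. That $p^*$ is well defined on objects was shown above: from a dianalytic cocycle $(\http)$ for $Q$ the explicit gluing produces a holomorphic cocycle $(\gttp)$ for $\calE=p^*Q$, together with the fibrewise-conjugation real structure $\sigt$. To see that $p^*$ is a functor, I would check that a dianalytic homomorphism $f:Q\to Q'$ pulls back to a homomorphism $p^*f:p^*Q\to p^*Q'$ of \emph{real} bundles, that is, a holomorphic bundle map commuting with the real structures. Holomorphy again follows from the Schwarz reflection principle applied in the charts $([\Wt],\zt)$, and the identity $p^*f\circ\sigt=\sigt'\circ p^*f$ is immediate from the fact that $f$ is fibrewise $\C$-linear while $\sigt,\sigt'$ act as fibrewise conjugation in the adapted trivialisations.

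For full faithfulness I would reduce the statement about homomorphisms to the statement about sections already proved. Given real bundles $(\calE,\sigt)$ and $(\calE',\sigt')$ over $M$, the bundle $\mathrm{Hom}(\calE,\calE')$ carries a natural real structure sending $\varphi\in\mathrm{Hom}(\calE_x,\calE'_x)$ to $\sigt'\circ\varphi\circ\sigt^{-1}\in\mathrm{Hom}(\calE_{\sigma(x)},\calE'_{\sigma(x)})$, and one checks this is $\C$-antilinear in $\varphi$ and involutive, hence a real structure covering $\sigma$. The defining condition $g\circ\sigt=\sigt'\circ g$ for a real homomorphism $g$ is then exactly the condition that $g$ be a \emph{real section} of $\mathrm{Hom}(\calE,\calE')$. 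Taking $\calE=p^*Q$ and $\calE'=p^*Q'$, there is a canonical isomorphism of real bundles $\mathrm{Hom}(p^*Q,p^*Q')\simeq p^*\mathrm{Hom}(Q,Q')$, so real homomorphisms $p^*Q\to p^*Q'$ are in bijection with real sections of $p^*\mathrm{Hom}(Q,Q')$, which by the section correspondence established above (real sections of a pulled-back bundle in bijection with dianalytic sections downstairs, via Schwarz reflection) are in bijection with dianalytic sections of $\mathrm{Hom}(Q,Q')$, that is, with dianalytic homomorphisms $Q\to Q'$. This bijection is visibly $f\mapsto p^*f$, giving both fullness and faithfulness at once.

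The remaining, and main, point is essential surjectivity: every real holomorphic bundle $(\calE,\sigt)$ over $M$ is isomorphic, as a real bundle, to $p^*Q$ for some dianalytic $Q$ over $S$. The strategy is to descend $\calE$ along $p$ using $\sigt$. Away from the fixed locus $\Ms=p^{-1}(\partial S)$ this is routine: $\sigma$ acts freely there, one trivialises over pairs of open sets exchanged by $\sigma$, and the real structure identifies the two trivialisations. The delicate part is to produce, near $\Ms$, local holomorphic frames that are \emph{$\sigt$-real}, i.e. frames $(s_1,\dots,s_r)$ of local sections with $\sigt(s_i)=s_i$ (equivalently, trivialisations in which $\sigt$ is fibrewise $\eta\mapsto\ov{\eta}$). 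In such frames the holomorphic transition maps automatically satisfy the reality condition $\gttp(\ov{z})=\ov{\gttp(z)}$, so they descend to a dianalytic cocycle on $S$ and hence define a dianalytic bundle $Q$ with $p^*Q\simeq\calE$.

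I expect the existence of $\sigt$-real frames near $\Ms$ to be the main obstacle. It amounts to a local triviality statement for real bundles, a Hilbert $90$ / descent phenomenon for the $\Z/2\Z$-action: starting from any local holomorphic frame, $\sigt$ produces a second frame differing from the first by a holomorphic matrix $A$ satisfying the cocycle identity $A(\ov{z})\,\ov{A(z)}=\Id$, and one must solve for a change of frame turning $A$ into the identity, which is possible because $\GL_r(\C)$ has $\GL_r(\R)$ as the fixed points of complex conjugation and the associated local cohomology vanishes. Once such frames exist, the matching of the descended bundle with $\calE$ and the verification that $p^*Q\simeq\calE$ as real bundles are formal, completing the proof that $p^*$ is an equivalence.
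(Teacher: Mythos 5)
Your proposal is correct, and it is substantially more complete than what the paper actually does: the paper's ``proof'' is a one-paragraph summary of the preceding construction, establishing only that $p^*$ is well defined on objects and that $p^*Q\simeq p^*Q'$ forces $Q\simeq Q'$ (via the Schwarz-reflection correspondence between real sections of $p^*Q$ and dianalytic sections of $Q$); essential surjectivity is not addressed at all. Your full-faithfulness argument via the real structure $\varphi\mapsto\sigt'\circ\varphi\circ\sigt^{-1}$ on $\mathrm{Hom}(\calE,\calE')$ is the same idea as the paper's, just carried out properly --- note only that $\mathrm{Hom}(Q,Q')$ is again dianalytic because, for a fixed atlas of $S$, the holomorphic-versus-antiholomorphic type of each transition matrix is dictated by the type of the chart transition $\pt\circ\ptp^{-1}$, so the two cocycles have the same type on each overlap. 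The genuinely new content you supply is essential surjectivity, and you correctly isolate the crux: producing $\sigt$-real holomorphic frames near $\Ms=p^{-1}(\partial S)$. Your Hilbert~90 sketch there can be made completely explicit: writing $\sigt(e_j)=\sum_i A_{ij}e_i$ with $A$ holomorphic and $A(x)\,\ov{A(\ov{x})}=\Id$, first use the vanishing of $H^1\big(\Z/2\Z,\GL_r(\C)\big)$ at a fixed point $x_0$ to normalise $A(x_0)=\Id$, then take $B=\tfrac{1}{2}(\Id+A)$, which is invertible near $x_0$ and satisfies $A(x)\,\ov{B(\ov{x})}=B(x)$, i.e. $A=B\,\ov{B\circ\sigma}^{\,-1}$, so the frame $e'_j=\sum_i B_{ij}e_i$ is $\sigt$-real. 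With that detail filled in, your argument is a complete proof of the lemma, whereas the paper implicitly defers this descent step to the literature.
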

\noindent Thus, instead of studying dianalytic vector bundles over a Klein surface, one may just as well choose to study real vector bundles over the complex cover of that Klein surface. Observe that these two equivalent categories are equivalent to a third one, the category of real algebraic vector bundles over a real algebraic curve (hence the term, real bundle). Indeed, a real algebraic curve $X/\R$ gives rise to a Riemann surface $M=X(\C)$ with anti-holomorphic involution $\sigma$ induced by the complex conjugation of $\C$, and complexification of real algebraic bundles over $X$ provides an equivalence of categories between the category of real algebraic bundles over $X$ and the category of real holomorphic bundles over $M$ (see \cite{Atiyah_real_bundles}, page 370). Vector bundles over real algebraic curves are studied in \cite{BHH}, where results similar to ours are obtained. Indeed, one may observe that a Klein surface $M/\sigma$, together with its sheaf of dianalytic functions, naturally gives rise to a scheme of finite type over $\R$. The set of real points of that scheme is in bijection with the boundary of $M/\sigma$.\\ The point that we try to make in the rest of the paper is that we can (almost) think of the set of real vector bundles over $M$ as the fixed-point set of an involution of a larger set, morally the set of isomorphism classes of all holomorphic vector bundles. Certainly, a real bundle over $M$ satisfies $\os{\calE}\simeq \calE$ (the isomorphism being induced by $\widetilde{\sigma}$). But the converse is not true in general (indeed, quaternionic bundles, meaning those which admit a $\C$-antilinear automorphism $\sigt$ such that $\sigt^2=-\Id_\calE$, are also fixed by that involution), so we cannot just consider the bundles whose isomorphism class is fixed under the involution $\calE\mapsto\os{\calE}$ if we want the real bundles to stand out. One may observe that restricting the involution to the set of isomorphism classes of stable bundles is not sufficient for our purposes, again because of the possibility that a stable bundle satisfying $\os\calE\simeq \calE$ might be quaternionic. Instead, we use the differential geometric approach of Atiyah, Bott and Donaldson, which consists in replacing holomorphic vector bundles of a given topological type with unitary connections on a fixed Hermitian bundle with that same topological type. Then, for each choice of a real Hermitian structure $\sigt$ on that Hermitian bundle, we construct an involution $\asigt$ on the space of connections, which induces $\bs:[\calE]\mapsto [\os{\calE}]$ on isomorphism classes of stable bundles, regardless of the choice of $\sigt$. The upshot of that construction is that the connections which are fixed by the various $\asigt$ thus constructed, are exactly those which define the real holomorphic bundles. In a forthcoming paper, we shall show that quaternionic bundles can also be characterised by means of an involution and use this, combined with the real case dealt with in the present work, to derive topological information on $\Fix(\bs)$, such as a bound on the number of connected components of that set.

\section{Moduli spaces of semistable bundles over a Riemann surface}\label{moduli_of_ss_bundles}

In this section, we recall the basics about moduli varieties of semistable holomorphic vector bundles over a compact Riemann surface $M$ with a fixed compatible Riemannian metric normalised to unit volume. The section contains nothing new and the specialist reader may skip altogether. Although some aspects of what we shall say extend to higher-dimensional compact K\"ahler manifolds, the benefit of working in complex dimension one is twofold. \begin{enumerate} \item The classification of smooth complex vector bundles over a compact Riemann surface $M$ is very simple to state; such a vector bundle $E$ is classified by two integers, its rank $r$ and its degree $d:=\int_M c_1(E) \in \Z$, where $c_1(E)\in H^2(M;\Z)$ is the first Chern class of $E$, viewed as a real-valued differential $2$-form on $M$. \item When studying holomorphic structures on a smooth complex vector bundle $E$ over $M$, we do not have to worry about integrability conditions; isomorphism classes of holomorphic structures on $E$ are in bijective correspondence with orbits of Dolbeault operators on $E$, as we shall recall shortly.\end{enumerate} References for the present section are sections $7$ and $8$ of \cite{AB}, section $2$ of \cite{Don_NS}, and sections $2.1$ and $2.2$ of \cite{DK}. Although the involution $\sigma$ of $M$ plays no role in this section, we denote $\Mod$ the moduli variety of semistable vector bundles of rank $r$ and degree $d$ on $M$, to avoid introducing another piece of notation.

\subsection{Holomorphic vector bundles and unitary connections}\label{hol_bundles_and_unitary_connections}

\begin{definition}[Dolbeault operator]
A \textbf{Dolbeault operator} $D$ on a smooth complex vector bundle $(E\to M)$ is a $\C$-linear map $$D:\sectionsofE \longrightarrow \zerooneforms$$ such that, for any smooth function $f:M\to\C$ and any smooth section $s\in\sectionsofE$, $$D(fs) = (\db{f})s + f(Ds)\ (Leibniz\ identity).$$ 
\end{definition}
\noindent The $\db$ in the definition above is the usual Cauchy-Riemann operator on $\functions$, taking a smooth function $f:M\to\C$ to the $\C$-antilinear part of $df\in\Omega^1(M;\C)$. In particular, $\db{f}=0$ if, and only if, $f$ is holomorphic (\textit{i.e.} the Cauchy-Riemann equations are satisfied). A Dolbeault operator on $E$ is also called a (0,1)-connection on $E$. In a local trivialisation $(U_{\tau},\psit)$ of the bundle, the Dolbeault operator $D$ takes the form $$(Ds)_{\tau} = \db{s_{\tau}} + B_{\tau} s_{\tau},$$ where $B_{\tau}\in\zeroonematrices$ and $\db{}$ acts componentwise on sections of $\psit : E_{\tau}\overset{\simeq}{\longrightarrow} U_{\tau} \times \C^r$. Denote $(\Ut,\psit)_{\tau\in T}$ a family of local trivialisations of $E$ which covers $M$ and let $$(g_{\ttp}: \Ut\cap\Utp \longrightarrow \GL_r(\C))_{\tau,\tau'}$$ be the corresponding smooth $1$-cocycle of transition maps. With our conventions on $1$-cocycles of transition maps, the family $(s_{\tau})_{\tau\in T}$ satisfies $s_{\tau}=g_{\ttp}s_{\tau'}$, and the family $(\Bt)_{\tau\in T}$ satisfies \begin{equation}\label{Dolbeault_compatibility} \Bt = g_{\ttp} \Btp g_{\ttp}^{-1} - (\db{g_{\ttp}}) g_{\ttp}^{-1}.\end{equation} Conversely, a Dolbeault operator is completely determined by such a family $B:=(\Bt)_{\tau\in T}$. In the following, we shall denote $\db_{B}$ the Dolbeault operator corresponding to a family $B=(\Bt)_{\tau\in T}$ satisfying (\ref{Dolbeault_compatibility}). It follows from the Leibniz identity that the difference $\db_{B}-\db_{B'}$ of two Dolbeault operators is a $\functions$-linear map from $\sectionsofE$ to $\zerooneforms$, so it corresponds to an element of the space $\zerooneendom$ of (0,1)-forms over $M$ with values in the bundle $\gl(E)$ of endomorphisms of $E$. Consequently, the set $\Dol(E)$ of all Dolbeault operators on $E$ is an infinite-dimensional complex affine space whose group of translations is $\zerooneendom$. If $u\in\Aut(E)$ is an automorphism of $E$, the operator $$\db_{u(B)}s := u\big(\db_B(u^{-1}s)\big)$$ is a Dolbeault operator on $E$ and this defines an action of $\Aut(E)$ on $\Dol(E)$ (observe that we denoted $\db_{u(B)}$ the result of the action of $u$ on $\db_B$). If we still denote $\db_B$ the Dolbeault operator on $\gl(E)$ induced by the operator $\db_B$ on $E$ (explicitly, it is the operator defined by the family $(\mathrm{ad}\, \Bt)_{\tau\in T}$, which satisfies relation (\ref{Dolbeault_compatibility}) when the cocycle $(\Ad\, g_{\ttp})_{\tau,\tau'}$ is used to represent $\gl(E)$) and expand the above formula using the Leibniz rule, we obtain the following well-known formula for the above action, $$\db_{u(B)} = \db_B - (\db_{\mathrm{ad} B} u)u^{-1}.$$ To obtain the local expression for this formula, one represents the automorphism $u$ by a family $(\ut:\Ut\to \GL_r(\C))_{\tau\in T}$ of smooth maps satisfying $\ut = g_{\ttp} \utp g_{\ttp}^{-1}$, from which one shows that $$\big(u(B))_{\tau} = \ut \Bt \ut^{-1} - (\db{\ut}) \ut^{-1}.$$ Note that the action of $\Aut(E)$ on $\Dol(E)$ is often denoted $$u(B)=B - (\db_B u)u^{-1}$$ in the literature, with a slight abuse of notation (writing simply $B$ for the operator $\db_B$, and $\db_B$ for $\db_{\mathrm{ad}B}$), which simplifies the practical computations. The fundamental result that we need about Dolbeault operators is the following proposition.
\begin{proposition}\label{hol_bundles_and_Dolbeault_op}
Let $E$ be a smooth complex vector bundles of rank $r$ and degree $d$ over the compact Riemann surface $M$. Then the set $\Vect_M^{hol}(r,d)$ of isomorphism classes of holomorphic vector bundles of rank $r$ and degree $d$ over $M$ is in bijection with the orbit space $\Dol(E)/\Aut(E)$.
\end{proposition}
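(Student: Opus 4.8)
The plan is to construct an explicit bijection by sending the class of a Dolbeault operator $\db_B$ to the isomorphism class of the holomorphic bundle $\calE_B$ whose local holomorphic sections are, by definition, the local smooth sections $s$ satisfying $\db_B s = 0$. Since this holomorphic structure is carried by the fixed smooth bundle $E$ itself, the underlying smooth bundle of $\calE_B$ is $E$, so $\calE_B$ automatically has rank $r$ and degree $d$ once we know it is genuinely holomorphic. The content of the proposition then splits into three assertions: that this recipe really does produce a holomorphic bundle (so that the map is defined), that it is constant on $\Aut(E)$-orbits and separates distinct orbits, and that every holomorphic bundle of the given topological type is obtained in this way.

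First I would address the existence of the holomorphic structure attached to $\db_B$, which is the analytic heart of the argument. Working in a local trivialisation, where $\db_B$ takes the form $s_\tau \mapsto \db s_\tau + \Bt s_\tau$ with $\Bt \in \zeroonematrices$, the task is to produce, near each point, an invertible matrix-valued function $\ut$ solving $\db \ut = \ut \Bt$; in the frame modified by such a $\ut$ the operator $\db_B$ becomes the trivial Cauchy--Riemann operator, so the columns of $\ut^{-1}$ furnish a local frame of $\db_B$-holomorphic sections, and the resulting transition functions are holomorphic by construction. On a small disk in $\C$ this semilinear $\db$-equation is solved by inverting $\db$ with the Cauchy transform and running a contraction-mapping argument, the smallness of the disk guaranteeing the contraction; this is exactly the point where complex dimension one makes life easy, as there is no integrability obstruction to overcome (compare the Koszul--Malgrange theorem in higher dimensions). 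This is the step I expect to be the main obstacle, everything else being essentially formal.

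Next I would check that the map descends to the quotient and is injective. If $B' = u(B)$ for some $u \in \Aut(E)$, then the relation $\db_{u(B)} s = u\big(\db_B(u^{-1}s)\big)$ shows that $u$ carries $\db_B$-holomorphic sections to $\db_{B'}$-holomorphic ones, hence is an isomorphism of holomorphic bundles $\calE_B \to \calE_{B'}$; conversely, an isomorphism of the holomorphic bundles attached to $B$ and $B'$ is in particular a smooth automorphism $u$ of $E$ intertwining $\db_B$ and $\db_{B'}$, which is precisely the statement that $B' = u(B)$. Thus the induced map $\Dol(E)/\Aut(E) \to \Vect_M^{hol}(r,d)$ is well defined and injective.

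Finally, for surjectivity I would start from an arbitrary holomorphic bundle $\calE$ of rank $r$ and degree $d$. Its underlying smooth bundle has the same rank and degree, hence is smoothly isomorphic to $E$ by the classification of smooth complex vector bundles over a Riemann surface recalled above. Transporting the holomorphic structure along this smooth isomorphism, I would recover a Dolbeault operator on $E$ by declaring the local holomorphic frames of $\calE$ to be killed by $\db_B$ and extending by the Leibniz rule; the holomorphicity of the transition functions of $\calE$ guarantees that these local operators patch into a global $\db_B$ satisfying the compatibility relation, and the bundle $\calE_B$ it defines is isomorphic to $\calE$ by construction. This exhibits $[\calE]$ in the image and completes the proof.
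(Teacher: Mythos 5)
Your proposal is correct and follows essentially the same route as the paper: the paper also observes that the bijection is formal except for the local existence of a full rank of $\db_B$-holomorphic sections (equivalently, local solvability of $\db\ut=\ut\Bt$), and for that single analytic point it simply refers to Donaldson--Kronheimer, subsection 2.2.2, which is exactly the contraction-mapping argument you sketch. Nothing further to add.
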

\noindent The proof of this result is easy except for the fact that a Dolbeault operator $\db_B$ on $E$ has sufficiently many linearly independent sections of $E$ satisfying $\db_B s=0$ (one needs the sheaf of germs of such sections to be locally free of rank $r$ over the sheaf of holomorphic functions on $M$), for a proof of which we refer to \cite{DK}, subsection 2.2.2, page 50. On higher-dimensional compact K\"ahler manifolds, the proposition remains true if $E$ is a fixed smooth complex vector bundle and one considers the set of holomorphic vector bundles having the same Chern classes as $E$ on the one hand, and integrable Dolbeault operators on $E$ on the other hand.\\ Let us now endow $E$ with a Hermitian metric $h$ and recall the following observation from linear algebra (\cite{AB}, section 8, page 570). The map \begin{equation*}
\begin{array}{rcl} \zeroonematrices & \longrightarrow & \antiHermonematrices \\ \Bt & \longmapsto & \At := \Bt - \Bt^* \end{array}\end{equation*} is an isomorphism of \textit{real} vector spaces, whose inverse map sends $\At$ to its $\C$-antilinear part, $$\At^{0,1}(\cdot) = \frac{\At(\cdot) + i \At(i\cdot)}{2}.$$ If $(g_{\ttp}: \Ut\cap \Utp \to \U_r)_{\tau,\tau'}$ is a \textit{unitary} $1$-cocycle representing the Hermitian vector bundle $(E,h)$ and if the family $(\Bt)_{\tau\in T}\in\Omega^{0,1}(U_{\tau},\gl_r(\C))$ satisfies $$\Bt = g_{\ttp} \Btp g_{\ttp}^{-1} - (\db{g_{\ttp}}) g_{\ttp}^{-1},$$ then $g_{\ttp}^*=g_{\ttp}^{-1}$ and the family $(\At = \Bt - \Bt^*)_{\tau\in T}$ satisfies $$\At = g_{\ttp} \Atp g_{\ttp}^{-1} - (dg_{\ttp})g_{\ttp}^{-1}.$$ In other words, if the family $(\Bt)_{\tau\in T}$ defines a Dolbeault operator $\db_B$ on $E$, then the family $A:=(\At)_{\tau\in T}$ defines a \textit{unitary connection} $d_A$ on $(E,h)$, given locally by $$(d_A s)_{\tau} = d\st + \At\st.$$ This sets up a bijection between $\Dol(E)$ and the set $\conn$ of unitary connections on $(E,h)$ which can be expressed globally as follows. The Hermitian structure on $E$ enables one to define the operator $\del_{B^*}$, and the maps \begin{equation}\label{real_isom} \begin{array}{rcl} \Dol(E) & \longrightarrow & \conn \\ \db_{B} & \longmapsto & d_{B-B^*} = \db_{B} + \del_{B^*} \end{array}, \end{equation} and $$\begin{array}{rcl} \conn & \longrightarrow & \Dol(E) \\ d_{A} & \longmapsto &  \db_{A^{0,1}}\ = d_A^{\, 0,1} \end{array},$$ which are inverse to one another. Now, as $\dim_{\R} M =2$, the Hodge star of $M$ induces a complex structure on $\oneforms$, and the map (\ref{real_isom}) is actually $\C$-linear for that complex structure. This means that $\Dol(E)$ and $\conn$ are in fact isomorphic as \textit{complex} affine spaces. Moreover, there is a unique $\Aut(E)$-action on $\conn$ making that isomorphism equivariant. It is given explicitly by the formula $$u(A) = A - (\db_{A^{0,1}}u)u^{-1} + \big((\db_{A^{0,1}}u)u^{-1}\big)^*,$$ for all $u\in \Aut(E)$ and all $A\in\conn$. This action extends the natural action of the group $\G_{(E,h)}$ of unitary automorphisms of $(E,h)$. The group $\unitarygaugegp$ is commonly called the unitary gauge group. And indeed, if $u$ is unitary, then $u^*=u^{-1}$, and the above formula becomes $$u(A) = A + (d_A u)u^{-1}$$ (again with the slight abuse of notation that consists in writing $A$ for $d_A$, and $d_A$ for $d_{\mathrm{ad}A}$, which we shall do systematically from now on). Locally, this action is given by the formula $$\big(u(A)\big)_{\tau} = \ut\At\ut^{-1} - (d\ut)\ut^{-1}.$$ Note that the group $\Aut(E)$ of all complex linear automorphisms of $E$ is commonly denoted $\cxgaugegp$, and called the complex gauge group. We then reach the goal of this subsection, which is to recall the following result.
\begin{proposition}
Let $(E,h)$ be a smooth Hermitian bundle of rank $r$ and degree $d$ over the compact Riemann surface $M$. Then the set $\Vect_M^{hol}(r,d)$ of isomorphism classes of holomorphic vector bundles of rank $r$ and degree $d$ over $M$ is in bijection with the orbit space $\conn/ \cxgaugegp$.
\end{proposition}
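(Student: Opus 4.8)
The plan is to assemble this bijection by composing two bijections that are essentially already established in the excerpt. By Proposition \ref{hol_bundles_and_Dolbeault_op}, the set $\Vect_M^{hol}(r,d)$ is in bijection with the orbit space $\Dol(E)/\Aut(E)$, so it suffices to produce a bijection between $\Dol(E)/\Aut(E)$ and $\conn/\cxgaugegp$. First I would recall that fixing a Hermitian metric $h$ on $E$ gives, via the map (\ref{real_isom}), an isomorphism of complex affine spaces $\Dol(E)\overset{\simeq}{\longrightarrow}\conn$ sending $\db_B$ to $d_{B-B^*}$, with inverse $d_A\mapsto\db_{A^{0,1}}$. Second, the group $\Aut(E)=\cxgaugegp$ is literally the same group acting in both pictures, and the whole point of the preceding discussion was to define the $\Aut(E)$-action on $\conn$ by the explicit formula $$u(A)=A-(\db_{A^{0,1}}u)u^{-1}+\big((\db_{A^{0,1}}u)u^{-1}\big)^*$$ precisely so as to make the affine isomorphism (\ref{real_isom}) equivariant.

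Granting equivariance, the argument is then purely formal: an equivariant bijection $\Phi\colon\Dol(E)\to\conn$ descends to a bijection of orbit spaces $\Dol(E)/\Aut(E)\to\conn/\cxgaugegp$, because $\Phi$ carries $\Aut(E)$-orbits onto $\cxgaugegp$-orbits bijectively. Composing this with the bijection of Proposition \ref{hol_bundles_and_Dolbeault_op} yields the desired bijection $\Vect_M^{hol}(r,d)\simeq\conn/\cxgaugegp$, which is the statement. So the proof reduces to verifying a single equivariance claim, and everything else is bookkeeping.

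The main obstacle I anticipate is therefore exactly that equivariance verification: checking that $\Phi(u(B))=u(\Phi(B))$, i.e. that the $\Aut(E)$-action defined on $\Dol(E)$ by $\db_{u(B)}=u\circ\db_B\circ u^{-1}$ transports under $\Phi$ to the stated formula for $u(A)$ on $\conn$. Concretely, one starts from the transported action $u(A)=\Phi\big(u(\Phi^{-1}A)\big)$, writes $B=A^{0,1}$, uses $\db_{u(B)}=\db_B-(\db_{\mathrm{ad}B}u)u^{-1}$ from the earlier computation, and then applies $\Phi$, which amounts to adding the $(1,0)$-part $\del_{(\,\cdot\,)^*}$. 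The term $-(\db_{A^{0,1}}u)u^{-1}$ is a $(0,1)$-form, so its image under $B\mapsto B-B^*$ contributes both itself and minus its adjoint, which is a $(1,0)$-form; this is where the two correction terms $-(\db_{A^{0,1}}u)u^{-1}$ and $+\big((\db_{A^{0,1}}u)u^{-1}\big)^*$ come from. The one point requiring genuine care is that this formula really defines an action (associativity), and that it restricts, on the unitary subgroup $\unitarygaugegp$, to the gauge-transformation formula $u(A)=A+(d_Au)u^{-1}$, which serves as a consistency check since there $u^*=u^{-1}$ forces the two correction terms to combine into $(d_Au)u^{-1}$.

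One should also note, for completeness, that the Hermitian metric $h$ is auxiliary: the resulting bijection $\Vect_M^{hol}(r,d)\simeq\conn/\cxgaugegp$ is canonical because it factors through the metric-independent object $\Dol(E)/\Aut(E)$, a remark worth recording even though it is not strictly needed for the bijection to exist. With equivariance in hand the proposition follows immediately, so I would keep the write-up short and concentrate the exposition on the equivariance computation sketched above.
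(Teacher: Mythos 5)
Your proposal is correct and follows exactly the route the paper intends: the proposition is obtained by composing the bijection of Proposition \ref{hol_bundles_and_Dolbeault_op} with the $\Aut(E)$-equivariant affine isomorphism (\ref{real_isom}) between $\Dol(E)$ and $\conn$, whose equivariance is precisely what the displayed formula for $u(A)$ encodes. Your closing remark on the metric-independence of the resulting bijection also matches the observation the paper makes immediately after the statement.
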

\noindent One may observe that this bijection does not depend on the choice of the metric $h$ on $E$. Indeed, if another metric $h'$ is chosen, then $h'=\varphi h \varphi^*$ for some $\varphi\in \cxgaugegp$, and $A$ is $h$-unitary if, and only if, $\varphi A\varphi^{-1}$ is $h'$-unitary. The bijection between $\Dol(E)$ and $\conn$, however, does depend on the choice of the metric $h$.
On higher-dimensional compact K\"ahler manifolds, a Dolbeault operator $\db_{B}$ is integrable if, and only if, the corresponding unitary connection has type $(1,1)$ curvature, and the proposition above may be generalised by restricting one's attention to such connections.

\subsection{The Narasimhan-Seshadri-Donaldson theorem}

\subsubsection{Characterisation of the stable bundles}

We recall the definition of stable, polystable and semistable bundles, which originates in geometric invariant theory. For an arbitrary, non-zero complex vector bundle $(\calE\to M)$, we denote $\mu(\calE)$ the ratio $\frac{\degr(\calE)}{\rk(\calE)}$, called the \textbf{slope} of $\calE$.
\begin{definition}[Stable, polystable and semistable bundles]\label{def_stable_bundle}
A holomorphic vector bundle $\calE$ over a compact Riemann surface $M$ is called \textbf{stable} if, for any holomorphic subbundle $\calF$ which is neither $\{0\}$ nor $\calE$, one has $$\mu(\calF) < \mu(\calE).$$ $\calE$ is called \textbf{polystable} if it is a direct sum of stable bundles of slope $\mu(\calE)$. Finally, $\calE$ is called \textbf{semistable} if, for any holomorphic subbundle $\calF$ which is neither $\{0\}$ nor $\calE$, one has $$\mu(\calF) \leq \mu(\calE).$$
\end{definition}
\noindent We recall that the condition $\mu(\calF) < \mu(\calE)$ is equivalent to $\mu(\calE/\calF) > \mu(\calE)$, and likewise with large inequalities. Evidently, a stable bundle is both polystable and semistable. And as a matter of fact, polystable bundles are semistable. To see this last point, one first needs to observe that if $$0\to \calE_1 \to  \calE \to \calE_2\to 0$$ is a short exact sequence of holomorphic vector bundles \textit{having same slope}, then $\calE_1,\calE_2$ semistable implies $\calE$ semistable, again with the same slope. One then concludes by arguing that a polystable bundle is a split extension of semistable bundles with the same slope.\\ Since we have seen in the previous subsection that holomorphic vector bundles of fixed topological type over $M$ correspond bijectively to orbits of unitary connections on a fixed Hermitian bundle with that same topological type, it is natural to look for a characterisation of the stable bundles in terms of the corresponding orbits of unitary connections. This is exactly the content of Donaldson's formulation of a theorem of Narasimhan and Seshadri (see \cite{Don_NS}). Before stating that result, we recall that, since $M$ is a compact oriented Riemannian manifold of real dimension $2$, we may identify smooth $2$-forms on $M$ with values in a vector bundle to global sections of that bundle ($0$-forms), using the Hodge star of $M$. In particular, if $F_A\in\antiHermtwoforms$ denotes the curvature of a unitary connection $A$ on $(E,h)$, then $*F_A$ is an element of $\antiHermsections$. We also recall that a holomorphic vector bundle is called \textit{indecomposable} if it cannot be written as a proper direct sum.
\begin{theorem}[Donaldson, \cite{Don_NS}]\label{charac_stable_bundles}
A holomorphic vector bundle $\calE$ of rank $r$ and degree $d$ is stable if, and only if, it is indecomposable, and admits a Hermitian metric $h$ and a compatible unitary connection $A$, whose curvature $F_A$ satisfies $$*F_A = i2\pi\frac{d}{r}\ \Id_{\calE} \in \Omega^0(M;\mathfrak{u}(\calE,h)).$$ Such a connection is then unique up to a unitary automorphism of $(\calE,h)$.
\end{theorem}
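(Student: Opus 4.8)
The plan is to establish the two implications and the uniqueness clause separately, using throughout that a stable bundle is \emph{simple}: its only holomorphic endomorphisms are scalar multiples of the identity (if $\phi$ were a non-scalar endomorphism, the kernel or image of $\phi-\lambda\,\Id$ for a suitable eigenvalue $\lambda$ would destabilise $\calE$). I would begin with the implication that is purely a matter of Chern-Weil theory: if $A$ is a unitary connection on $(E,h)$ with $*F_A = i2\pi\frac{d}{r}\Id_E$, then the associated holomorphic bundle $\calE = (E,\db_{A^{0,1}})$ is polystable, and indecomposability then promotes polystability to stability, since a polystable bundle is by definition a direct sum of stable bundles of the same slope.

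To prove polystability, let $\calF\subsetneq\calE$ be a non-zero holomorphic subbundle and let $\pi\in\Omega^0(M;\mathfrak{gl}(E))$ be the $h$-orthogonal projection onto the underlying smooth subbundle of $\calF$. The standard computation of the degree of a subbundle in terms of curvature and the second fundamental form $\beta := (\db_{A^{0,1}}\pi)|_{\calF}$ gives $\degr(\calF) = \frac{\rk(\calF)}{\rk(\calE)}\,d - \frac{1}{2\pi}\|\beta\|_{L^2}^2$, so that $\mu(\calF)\leq\mu(\calE)$, with equality precisely when $\beta = 0$; in that case $\calF$ is preserved by $d_A$ and splits off as an $h$-orthogonal, $d_A$-parallel direct summand. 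Applying this to a subbundle of maximal slope and iterating yields the decomposition of $\calE$ into a direct sum of stable summands of slope $\mu(\calE)$.

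The converse, the existence of a Hermitian metric and a compatible connection with constant central curvature on a given stable bundle, is the analytic core of the Narasimhan-Seshadri theorem in Donaldson's guise, and here I would invoke Donaldson's argument (\cite{Don_NS}) rather than reconstruct it. Fixing $(E,h)$ and a Dolbeault operator $\db_B$ representing $\calE$, the connections inducing a holomorphic structure isomorphic to $\calE$ form the complex gauge orbit $\cxgaugegp\cdot d_{B-B^*}$, and one minimises the Yang-Mills functional $A\mapsto \|*F_A - i2\pi\frac{d}{r}\Id_E\|_{L^2}^2$ along it. The main obstacle lies entirely in this minimisation: one must show that a minimising sequence, suitably gauge-fixed, converges to a smooth connection attaining the value zero. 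This is where stability is indispensable---it is exactly the hypothesis that excludes the degenerations (the weak splitting-off of a destabilising subsheaf) which would otherwise prevent convergence---and, combined with Uhlenbeck's compactness theorem and elliptic regularity for the limiting weak solution, it produces the desired connection $A$ with $*F_A = i2\pi\frac{d}{r}\Id_E$ and $(E,\db_{A^{0,1}})\simeq\calE$.

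For the uniqueness clause, suppose $A$ and $A'$ both satisfy the curvature equation and both induce holomorphic structures isomorphic to $\calE$. Then they lie in the same complex gauge orbit, so $A' = g\cdot A$ for some $g\in\cxgaugegp$; setting $s := g^*g$, a positive self-adjoint automorphism, and subtracting the two equations $*F_{A}=*F_{A'}=i2\pi\frac{d}{r}\Id_E$, a Weitzenb\"ock identity together with the maximum principle on the compact surface $M$ forces $s$ to be $d_A$-parallel, hence a holomorphic automorphism of $\calE$. Simplicity of $\calE$ then makes $s$ a positive scalar multiple of $\Id_E$, so that after rescaling $g$ is unitary; thus $A$ and $A'$ differ by a unitary automorphism of $(\calE,h)$, as required.
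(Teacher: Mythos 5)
The paper offers no proof of this statement: it is quoted verbatim as Donaldson's theorem, with a reference to \cite{Don_NS}, and is used as a black box in the rest of the article. So there is no internal argument to compare yours against, and the right benchmark is the literature itself. Measured against that, your outline is correct and is the standard one. The easy implication (constant central curvature $\Rightarrow$ polystable, then indecomposability $\Rightarrow$ stable) via the Chern--Weil formula $\degr(\calF)=\frac{\rk(\calF)}{\rk(\calE)}\,d-\frac{1}{2\pi}\|\beta\|_{L^2}^2$ for the second fundamental form of an $h$-orthogonal projection is exactly the argument in \cite{AB}, section 8, and the uniqueness clause via $s=g^*g$, the maximum principle, and simplicity of stable bundles is the standard one. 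For the hard implication you rightly defer to \cite{Don_NS}; the one point worth making more precise is that Donaldson's mechanism for extracting a contradiction from a non-convergent minimising sequence is an induction on the rank $r$: the weak limit of the minimising sequence (after Uhlenbeck compactness) defines a destabilising subsheaf whose saturation is a subbundle of strictly smaller rank to which the inductive hypothesis applies, and it is there---not merely in ``excluding degenerations''---that stability enters. With that caveat, and modulo sign conventions for the curvature integrand, your sketch is a faithful account of the proof the paper is implicitly relying on.
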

\noindent A holomorphic vector bundle is therefore polystable if, and only if, it admits a unitary connection of the type above (such a connection is called a \textbf{Yang-Mills connection}). We shall often denote $\mu= \frac{d}{r}$ the slope of a holomorphic vector bundle of rank $r$ and degree $d$.\\ Let now $(E,h)$ be a fixed smooth Hermitian vector bundle of rank $r$ and degree $d$ over $M$. The map $$F: \begin{array}{rcl} \conn & \longrightarrow & \twoforms \\ A & \longmapsto & F_A:= d_A\circ d_A\end{array}$$ taking a unitary connection to its curvature is equivariant for the action of $\G_{(E,h)}$ on $\conn$ considered in the previous subsection and the conjugacy action of $\G_{(E,h)}$ on $\twoforms$. The Hodge star $$*: \begin{array}{rcl} \twoforms &\longrightarrow & \sections\\ R & \longmapsto & s\ \mathrm{such\ that}\ R=s\, \vol_M\end{array}$$ is also equivariant for the conjugacy action of the gauge group, so, as $\tmu$ is a central element in $\sections$, the fibre $(*F)^{-1}(\{\tmu\})$ is $\G_{(E,h)}$-invariant.
\begin{corollary}
The set of gauge equivalence classes of polystable holomorphic vector bundles of rank $r$ and degree $d$ over $M$ is in bijection with $$(*F)^{-1}(\{\tmu\}) / \G_{(E,h)}.$$
\end{corollary}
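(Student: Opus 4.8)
The plan is to combine Donaldson's characterisation of polystable bundles (theorem \ref{charac_stable_bundles}) with proposition \ref{hol_bundles_and_Dolbeault_op} and the $\C$-linear isomorphism $\conn \simeq \Dol(E)$ recalled in subsection \ref{hol_bundles_and_unitary_connections}. The key observation is that the corollary asserts a bijection between two orbit spaces, so I would construct the map in both directions and check that they descend to the quotients and are mutually inverse.

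First I would recall that, by proposition \ref{hol_bundles_and_Dolbeault_op} together with the isomorphism $\conn \overset{\simeq}{\longrightarrow} \Dol(E)$ and the second proposition of subsection \ref{hol_bundles_and_unitary_connections}, the set $\Vect_M^{hol}(r,d)$ of isomorphism classes of holomorphic vector bundles is in bijection with $\conn/\cxgaugegp$. The corollary restricts this bijection, on the holomorphic side, to polystable bundles, and on the connection side, to the subset $(*F)^{-1}(\{\tmu\})$ of Yang-Mills connections. So the strategy is to show that, under the bijection $\Vect_M^{hol}(r,d) \simeq \conn/\cxgaugegp$, the isomorphism class of a bundle $\calE$ lies in the image of $(*F)^{-1}(\{\tmu\})$ if, and only if, $\calE$ is polystable, and then to identify the fibres of this restriction as exactly the $\unitarygaugegp$-orbits.

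The forward direction is immediate from theorem \ref{charac_stable_bundles}: if $\calE$ is polystable, it is a direct sum of stable bundles of the same slope $\mu = d/r$, each of which admits by Donaldson's theorem a Yang-Mills connection; the direct-sum connection then satisfies $*F_A = \tmu$, so the class of $\calE$ meets $(*F)^{-1}(\{\tmu\})$. For the converse, if $A \in (*F)^{-1}(\{\tmu\})$, I would argue that the associated holomorphic bundle $\calE$ must be polystable. The point is that any holomorphic subbundle $\calF\subset\calE$ inherits a unitary connection whose curvature can be estimated via the Chern-Weil formula relating $\degr(\calF)$ to the integral of the curvature, giving $\mu(\calF)\leq\mu(\calE)$ (semistability), with equality forcing $\calF$ to be a direct summand admitting its own Yang-Mills connection. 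This is the standard argument that a Hermitian-Einstein connection yields a polystable bundle. Finally, for the identification of fibres, I would use the uniqueness clause of theorem \ref{charac_stable_bundles}: two Yang-Mills connections defining isomorphic polystable bundles differ by a \emph{unitary} automorphism, so that the fibres of the restricted map are exactly the $\unitarygaugegp$-orbits rather than the full $\cxgaugegp$-orbits. This is what replaces $\cxgaugegp$ by $\unitarygaugegp$ in passing from $\conn/\cxgaugegp$ to $(*F)^{-1}(\{\tmu\})/\unitarygaugegp$.

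The main obstacle is the converse direction of the first equivalence, namely proving that a connection with $*F_A = \tmu$ necessarily defines a polystable bundle, since the stable (indecomposable) case is precisely Donaldson's theorem and the reduction to it for the decomposable case requires the Hermitian-Einstein subbundle argument above. However, since the corollary is stated as a direct consequence of theorem \ref{charac_stable_bundles} and the preceding propositions, I expect the intended proof to be short: one simply notes that a polystable bundle is a direct sum of stable bundles of slope $\mu$, applies Donaldson's theorem summand by summand to produce and characterise the Yang-Mills connection, and invokes the uniqueness statement to pin down the fibres of the quotient map as unitary gauge orbits.
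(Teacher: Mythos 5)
Your proposal is correct and follows essentially the same route the paper intends: the corollary is meant to fall out of Donaldson's theorem \ref{charac_stable_bundles} via the remark that a bundle is polystable if and only if it admits a Yang-Mills connection (applied summand by summand to the stable factors), combined with the bijection $\Vect_M^{hol}(r,d)\simeq\conn/\cxgaugegp$ and the uniqueness-up-to-unitary-automorphism clause to identify the fibres as $\unitarygaugegp$-orbits. The extra detail you supply for the converse (the Chern--Weil/second-fundamental-form argument showing a Hermitian--Einstein connection yields a polystable bundle) is exactly the content the paper subsumes into the phrase ``therefore polystable if, and only if, it admits a unitary connection of the type above.''
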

 
\subsubsection{The K\"ahler structure of moduli spaces of semistable bundles}

Recall that we have denoted $\Mod$ the moduli variety of semistable holomorphic vector bundles of rank $r$ and degree $d$ over $M$. As every semistable bundle of slope $\mu$ admits a Jordan-H\"older filtration whose successive quotients are stable bundles of slope $\mu$ (see for instance \cite{Le_Potier_ENS}, expos\'e 2, theorem I.2, page 33), the associated graded vector bundle is a polystable bundle of rank $r$ and degree $d$, whose isomorphism class, as a graded vector bundle, is independent of the choice of the filtration. Next, by a theorem of Seshadri (\cite{Seshadri}), two semistable holomorphic bundles define the same point in $\Mod$ if, and only if, the associated graded bundles are isomorphic. This theorem, combined with Donaldson's characterisation of polystable bundles, establishes a bijection between $\Mod$ and the set of gauge equivalence classes of polystable bundles of rank $r$ and degree $d$, which we have just expressed as $$(*F)^{-1}(\{i2\pi\frac{d}{r}\ \mathrm{\Id}_E\})/\G_{(E,h)},$$ where $F:\conn\to\twoforms$ is the curvature and $*$ is the Hodge star of $M$. We may also write this $$\Mod = F^{-1}(\{*i2\pi\frac{d}{r}\ \mathrm{\Id}_E\})/\G_{(E,h)}.$$ The $\Ad$-invariant positive definite inner product $(M,N)\mapsto -\tr(MN)$ on $\fraku_r$ induces a gauge-invariant Riemannian metric on the bundle $\antiHermendom$ of anti-Hermitian endomorphisms of $(E,h)$. This in turn induces a $\G_{(E,h)}$-invariant positive definite scalar product on the real vector space $\sections$, defined by $$(s,t) \mapsto \int_M -\tr(st)\, \vol_M.$$ Hence a $\G_{(E,h)}$-equivariant isomorphism of real vector spaces $$\begin{array}{rcl}\sections & \longrightarrow & \big(\sections\big)^*\\ t & \longmapsto & \left( s\mapsto \int_M -\tr(st)\, \vol_M\right)\end{array}.$$ Composing the Hodge star $*:\twoforms \to \sections$ with this isomorphism gives a $\G_{(E,h)}$-equivariant isomorphism of real vector spaces $$\begin{array}{rcl} \twoforms & \overset{\simeq}{\longrightarrow} & (\sections)^* \\ R & \longmapsto & \left( s \mapsto \int_M -\tr(sR)\right) \end{array},$$ so we may think of the curvature $F$ as a map with values in $(\sections)^*$, the dual of the Lie algebra of the gauge group. Atiyah and Bott have shown (\cite{AB}) that $\conn$ has a K\"ahler structure and that the action of $\G_{(E,h)}$ is a K\"ahler action which admits the curvature as a momentum map. Therefore, the above presentation of $\Mod$ gives this moduli space a K\"ahler structure, obtained by performing reduction at level $*i2\pi\frac{d}{r} \, \mathrm{\Id}_E$ on the infinite-dimensional K\"ahler manifold $\conn$. We shall sometimes denote $$\quot$$ the quotient $F^{-1}(\{*i2\pi\frac{d}{r}\, \mathrm{\Id}_E\})/\G_{(E,h)}$. Note that we should in fact consider $L^2_1$ connections instead of $C^{\infty}$ ones, with curvature in $L^2$ and gauge transformations in $L^2_2$, in order to make the affine space $\conn$ a Banach manifold (see \cite{AB}, sections $6$ and $14$, and \cite{Don_NS}, section $2$). We have deliberately omitted this, to lighten the exposition and stress the geometric ideas underlying the construction.\\ The only part of the above theory that we need to make explicit in order to use it later in the paper is the expression of the symplectic form of $\conn$, which we denote $\w$. As the space of all unitary connections is an affine space whose group of translations is $\oneforms$, the tangent space at a given point $A$ of $\conn$ may be canonically identified to $\oneforms$. If $a,b\in\oneforms$ are two tangent vectors at $A$, the $2$-form $a\wedge b$ is $\antiHermendom \otimes \antiHermendom$-valued. Combining this with the $\Ad$-invariant scalar product $M\otimes N \mapsto -\tr(MN)$ of $\fraku_r$, one obtains a real valued $2$-form on $M$, denoted $-\tr(a \wedge b)$. Explicitly, it is the $2$-form on $M$ defined pointwise by $$\big(-\tr(a\wedge b)\big)_x: \begin{array}{rcl} T_x M \times T_x M & \longrightarrow & \R \\ (v_1,v_2) & \longmapsto & -\frac{1}{2} \tr\big( a_x(v_1)b_x(v_2) - a_x(v_2)b_x(v_1)\big)\end{array}.$$ Then, the expression $$\w_A(a,b) = \int_M -\tr(a \wedge b)$$ defines a $2$-form $\w$ on $\conn$, and this $2$-form is symplectic.

\section{Real semistable bundles over a Riemann surface}\label{lag_embedding}

In this section, we show that, for any anti-holomorphic involution $\sigma$ of $M$, the set $$\calN:=\{[\gr(\calE)]\in\Mod : \calE\ \mathrm{is\ semistable\ and\ real}\}$$ of moduli of real semistable bundles of rank $r$ and degree $d$ is a totally real, totally geodesic, Lagrangian submanifold of $\Mod$. Our strategy is to show that there is an anti-symplectic, involutive isometry $\bs$ of $\Mod$ such that $\calN$ is a union of connected components of $\Fix(\bs)$. To that end, we use the presentation of the moduli variety $\Mod$ as a K\"ahler quotient, $$\Mod = \quot,$$ and show that $\bs$ comes from a family of anti-symplectic, involutive isometries $\asigt$ of $\conn$, where $\sigt$ runs through the set of real Hermitian structures of $(E,h)$.\\ Elements of $\calN$ are, by definition, strong equivalence classes of semistable holomorphic vector bundles of rank $r$ and degree $d$ which contain a real bundle. As a matter of fact, more is true: any bundle in the strong equivalence class of a bundle which is both semistable and real is itself real. Let us show this. By Seshadri's theorem, the strong equivalence class of a semistable holomorphic vector bundle $\calE$ is the graded isomorphism class of the graded vector bundle $\gr(\calE)$ associated to any Jordan-H\"older filtration of $\calE$. The successive quotients of that filtration are stable bundles of slope equal to that of $\calE$, and $\gr(\calE)$ is therefore a polystable bundle of slope $\mu(\calE)$. Assume now that $\calE$ is both semistable and real. Then, for any Jordan-H\"older filtration of $\calE$, the successive quotients are both stable and real (and have same slope). Indeed, this is a simple consequence of the fact that the kernel and the image of a homomorphism of real bundles (a bundle map which intertwines the real structures) have naturally induced real structures, and this in fact turns the category of real semistable bundles of slope $\mu$ into an Abelian category  which is stable by extensions (a strict subcategory of the Abelian category of semistable bundles of slope $\mu$, compare \cite{Le_Potier_ENS}). As a consequence, the graded bundle associated to a real semistable bundle is a direct sum of bundles which are both stable and real, and any bundle $\calE$ such that $\gr(\calE)$ is a direct sum of bundles which are both stable and real is itself real. The next result then shows that points of $\calN$ are precisely graded real isomorphism classes of such bundles. This in turn suggests that points of $\calN$ should, perhaps, be thought of as moduli of real semistable bundles over $(M,\sig)$.
\begin{proposition}\label{real_moduli}
Let $(\calE,\sigt)$ and $(\calE',\sigt')$ be two holomorphic bundles which are both semi\-stable and real, and assume that $\gr(\calE)$ is isomorphic to $\gr(\calE')$ as a graded vector bundle. Then $\gr(\calE)$ is isomorphic to $\gr(\calE')$ as a graded real vector bundle.
\end{proposition}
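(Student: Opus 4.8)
The plan is to deduce the statement from two facts about stable bundles: the uniqueness of the decomposition of a polystable bundle into stable summands, and the uniqueness, up to isomorphism, of a real structure on a given stable bundle. By the discussion preceding the proposition, both $\gr(\calE)$ and $\gr(\calE')$ are polystable of slope $\mu$ and decompose as finite direct sums of bundles that are simultaneously stable and real; I write $\gr(\calE)\cong\bigoplus_i (\mathcal{S}_i,\sigt_i)$ and $\gr(\calE')\cong\bigoplus_j(\mathcal{T}_j,\sigt_j')$, where each $\mathcal{S}_i$ and each $\mathcal{T}_j$ is stable of slope $\mu$. Recall that Schur's lemma for stable bundles gives $\End(\mathcal{S})=\C\,\Id$ for every stable $\mathcal{S}$, while $\mathrm{Hom}(\mathcal{S},\mathcal{S}')=0$ whenever $\mathcal{S},\mathcal{S}'$ are non-isomorphic stable bundles of the same slope (any nonzero morphism between stable bundles of equal slope is forced to be an isomorphism). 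Consequently the decomposition of a polystable bundle into stable summands is unique up to reordering and isomorphism.

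First I would use the given holomorphic isomorphism $\gr(\calE)\cong\gr(\calE')$ together with this uniqueness to produce a bijection $\pi$ of the index sets with $\mathcal{S}_i\cong\mathcal{T}_{\pi(i)}$ as holomorphic bundles; fix such isomorphisms $\psi_i\colon\mathcal{S}_i\to\mathcal{T}_{\pi(i)}$. It then suffices to promote each $\psi_i$ to an isomorphism of real bundles, since a direct sum of real isomorphisms is again a real isomorphism and respects the grading, the latter coinciding with the canonical isotypic decomposition.

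The heart of the argument is the following claim: on a stable bundle $\mathcal{S}$, any two real structures are isomorphic. Indeed, if $\sigt_1$ and $\sigt_2$ are real structures on $\mathcal{S}$, their composite $\sigt_1\circ\sigt_2$ is a holomorphic automorphism of $\mathcal{S}$ (the composite of two $\C$-antilinear maps covering $\sigma$), hence equals $\lambda\,\Id$ for some $\lambda\in\C^\times$ by Schur; writing $\sigt_1=\lambda\,\sigt_2$ and using $\sigt_1^2=\sigt_2^2=\Id$ forces $|\lambda|=1$, and choosing $c\in\C^\times$ with $\overline{c}/c=\lambda$, one checks that the scalar automorphism $c\,\Id$ intertwines $\sigt_1$ and $\sigt_2$. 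To finish, I would transport the real structure $\sigt_{\pi(i)}'$ along $\psi_i$ to a second real structure $\psi_i^{-1}\circ\sigt_{\pi(i)}'\circ\psi_i$ on $\mathcal{S}_i$, apply the claim to compare it with $\sigt_i$, and compose $\psi_i$ with the resulting scalar automorphism to obtain a real isomorphism $\mathcal{S}_i\to\mathcal{T}_{\pi(i)}$. I expect the main obstacle to be precisely this claim, where the interplay between $\C$-antilinearity and the involutivity condition $\sigt_k^2=\Id$ must be handled with care (this is exactly the point that distinguishes real from quaternionic structures); the remaining steps are the standard Krull--Schmidt bookkeeping for polystable bundles, together with the observation that $\os{\mathcal{S}}$ has the same slope as $\mathcal{S}$, so that conjugation preserves the categories involved.
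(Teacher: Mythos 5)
Your proof is correct and follows essentially the same route as the paper: the heart in both cases is that two real structures on a stable bundle compose to a scalar $\lambda$ by Schur's lemma, that involutivity forces $|\lambda|=1$, and that a square-root scalar automorphism then conjugates one real structure into the other. Your reduction to the stable case via uniqueness of the decomposition of a polystable bundle into stable summands is just a more explicit rendering of the paper's ``induction on the length of the Jordan--H\"older filtration,'' both resting on the preceding observation that the graded object of a real semistable bundle splits into summands that are simultaneously stable and real.
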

\begin{proof}
One first proves the result under the additional assumption that $\calE$ and $\calE'$ are both stable, and conclude by induction on the length of the Jordan-H\"older filtration. For a stable bundle $\calE$, one has $\gr(\calE)\simeq \calE$, so the assumption of the proposition is that $\calE\simeq \calE'$. Replacing $\sigt'$ with $\varphi\sigt'\varphi^{-1}$ if necessary, we may further assume that $\sigt$ and $\sigt'$ are two distinct real structures on the same vector bundle $\calE$. Then $\sigt\sigt'$ is $\C$-linear and, as $\calE$ is stable, this implies that $\sigt\sigt'=\lambda\in\C^*$. This in turn implies that $\sigt = |\lambda|^2 \sigt$, so $\lambda = e^{i\theta}$ for some $\theta \in \R$, whence one obtains $$\sigt = e^{i\theta} \sigt' = e^{i\frac{\theta}{2}} \sigt' (e^{-i\frac{\theta}{2}}\cdot),$$ showing that $\sigt$ and $\sigt'$ are conjugate by an automorphism of $\calE$.
\end{proof}
\noindent As a final remark on $\calN$, we observe that the functor $\calE \mapsto \os{\calE}$ sends a Jordan-H\"older filtration of $\calE$ to a Jordan-H\"older filtration of $\os{\calE}$, so it induces an automorphism $$\bs: [\gr(\calE)] \mapsto [\gr(\os{\calE})]$$ of the moduli variety $\Mod$, which is involutive and fixes $\calN$ pointwise. We now set out to prove that this involution is an anti-symplectic isometry of $\Mod$, and that $\calN$ is a union of connected components of $\Fix(\bs)$. Note that, since the tangent space to $\Mod$ at a given smooth point $[\gr(\calE)]$ may be identified with $H^1(M;\End(\calE))$, one sees that the involution $\bs$ is anti-holomorphic (for the tangent map takes an $\End(\calE)$-valued holomorphic $1$-form $\nu$ to $\os{\nu}$). We shall show:
\begin{enumerate}
\item that there is an involution $\asigt$ associated to each choice of a real Hermitian structure $\sigt$ on $(E,h)$, such that a unitary connection $A\in\conn$ defines a real holomorphic structure on $(E,h,\sigt)$ if, and only if, $\asigt(A)=A$.
\item that $\asigt$ is an anti-symplectic isometry of $\conn$ which induces $\bs$ on the K\"ahler quotient $$\fibre/ \unitarygaugegp = \Mod,$$ confirming the fact that the latter involution is an anti-symplectic isometry.
\item that we can form a so-called \textbf{real quotient}, $$\calL := \Big(\fibre\Big)^{\asigt} / \realgaugegp,$$ which embeds onto a union of connected component of $\calN\subset \Mod$. As $\asigt$ induces $\bs$, the real quotient also embeds onto a union of connected components of $\Fix(\bs)$, and our result will be proved.
\end{enumerate}

\subsection{Real unitary connections}

Let $(E,h,\sigt)$ be a fixed real Hermitian bundle (meaning that $\sigt$ is a $\C$-antilinear isometry which covers $\sigma$ and squares to $\Id_E$). The choice of $\sigt$ induces a canonical isomorphism $\varphi: \os{E} \overset{\simeq}{\longrightarrow} E$, as well as so-called \textbf{real invariants} which classify $(E,h,\sigt)$ up to isomorphism of real Hermitian bundles. We denote $\Ms$ the fixed-point set of $\sigma:M\to M$, and $g$ the genus of $M$.
\begin{proposition}[\cite{BHH}, Propositions 4.1 and 4.2]\label{real_invariants}One has:
\begin{enumerate}
\item if $\Ms = \emptyset$, then real Hermitian bundles are topologically classified by their rank and degree. It is necessary and sufficient for a real Hermitian bundle of rank $r$ and degree $d$ to exist that $d$ should be even.
\item if $\Ms \not= \emptyset$, then $(\Es \to \Ms)$ is a real vector bundle in the ordinary sense, over the disjoint union $\Ms = \calC_1 \sqcup \ldots \sqcup \calC_k$ of at most $(g+1)$ circles. Denoting $w^{(j)} := w_1(\Es_j) \in H^1(S^1; \Z / 2\Z) \simeq \Z / 2\Z$ the first Stiefel-Whitney class of $\Es$ restricted to $\calC_j$, real Hermitian bundles over $M$ are topologically classified by their rank, their degree, and the sequence $(w^{(1)}, \ldots, w^{(k)})$. It is necessary and sufficient for a real Hermitian bundle with given invariants $r$, $d$ and $(w^{(1)}, \ldots, w^{(k)})$ to exist that $$w^{(1)} + \cdots + w^{(k)} \equiv d\ (\mod 2).$$
\end{enumerate}
\end{proposition}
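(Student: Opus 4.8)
\subsection*{Proof proposal}

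The plan is to treat the two cases through a single mechanism: the quotient $S := M/\sig$ is a compact surface, closed and non-orientable when $\Ms=\emptyset$ and carrying $\partial S = \Ms$ as its boundary when $\Ms\neq\emptyset$, and a real Hermitian bundle $(E,h,\sigt)$ over $M$ is the same datum as a complex Hermitian bundle over $S$ together with, along $\partial S$, a reduction of its structure group to the orthogonal group (the latter being exactly the real subbundle $\Es\to\Ms$). First I would record the topology of the fixed locus: since $\sigma$ is an anti-holomorphic involution of a compact Riemann surface of genus $g$, Harnack's theorem gives that $\Ms$ is either empty or a disjoint union $\calC_1\sqcup\cdots\sqcup\calC_k$ of at most $g+1$ circles, and that $M$ is the topological double of $S$ along $\Ms$. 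This is what makes the listed data exhaustive: rank and degree come from $E$ itself, while each $w^{(j)}=w_1(\Es|_{\calC_j})\in\Z/2\Z$ is the obstruction to orientability of the real subbundle over the circle $\calC_j$.

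Next I would reduce the whole statement to the rank-one case. Over the $1$-complex onto which $S$ deformation retracts, a complex vector bundle is classified by its rank, so a real Hermitian bundle admits a $\sigt$-invariant splitting $E\cong \underline{\C}^{\,r-1}\oplus\det E$ in which $\underline{\C}^{\,r-1}$ carries the standard real structure; producing this splitting amounts to exhibiting $r-1$ real sections in general position, which exist because $\dim_{\R} M=2$. Restricting to $\calC_j$ gives $\Es|_{\calC_j}\cong\underline{\R}^{\,r-1}\oplus(\det E)^{\sigt}|_{\calC_j}$, so $w^{(j)}=w_1\big((\det E)^{\sigt}|_{\calC_j}\big)$ depends only on the real line bundle $\det E$. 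Thus it suffices to classify real Hermitian \emph{line} bundles, for which $\degr$ and the $w^{(j)}$ are the only invariants.

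For line bundles I would argue by the doubling/clutching picture. Writing $M=S\cup_{\Ms}\ov{S}$ and reconstructing $L$ from a bundle $Q$ over $S$ clutched to $\ov{Q}$ over $\ov S$ along $\Ms$ via the gluing prescribed by $\sigt$, the degree $d=\int_M c_1(L)$ equals the total winding number of the gluing maps along the circles $\calC_j$. A direct computation of the real subbundle fixed by an antilinear involution $v\mapsto c(\theta)\,\ov v$ on $\underline{\C}\to S^1$ shows that $(\det E)^{\sigt}|_{\calC_j}$ is the M\"obius bundle precisely when the winding number of $c$ over $\calC_j$ is odd, so that $w^{(j)}$ is the reduction modulo $2$ of that winding number; summing over $j$ yields $d\equiv w^{(1)}+\cdots+w^{(k)}\ (\mod 2)$. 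When $\Ms=\emptyset$ the same idea takes the cleaner form $d=\langle p^*e(V),[M]\rangle=2\,\langle e(V),[S]\rangle$, where $V\to S$ is the real rank-two bundle to which the underlying real plane bundle of $L$ descends under the free action; this proves $d$ even. Existence in every admissible case then follows by running these constructions in reverse (prescribing winding numbers with the correct parities, or prescribing $e(V)$), and completeness follows because, over the $2$-complex $S$, two line bundles with the same boundary reduction and the same degree differ by a clutching that becomes null-homotopic once the $H^2$-obstruction (the degree) and the $1$-skeleton data (the $w^{(j)}$) agree.

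The main obstacle is the degree congruence, which is the only place where the complex invariant $d$ of $E$ over $M$ and the real invariants $w^{(j)}$ of $\Es$ over $\Ms$ interact. Everything else is either classical surface topology (Harnack, classification of bundles over graphs and over surfaces with boundary) or routine two-dimensional obstruction theory; the delicate point is to match, carefully and with consistent orientation conventions along each $\calC_j$, the mod-$2$ reduction of the winding number of the clutching function to $w_1$ of the fixed real subbundle, and to check that the doubling genuinely recovers $\int_M c_1(E)$ rather than a boundary-corrected or twisted-coefficient surrogate.
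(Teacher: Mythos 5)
The paper gives no proof of this proposition; it is quoted from \cite{BHH}, Propositions 4.1 and 4.2, so the only meaningful comparison is with the argument of that reference. Your outline follows essentially the same route as the standard proof there: Harnack's bound for the topology of $\Ms$, reduction to line bundles by splitting off a trivial real summand (which works because a generic \emph{real} section of a bundle of rank $r\geq 2$ is nowhere zero both on the $2$-dimensional free locus and on the $1$-dimensional fixed locus, where it must take values in the rank-$r$ real subbundle $\Es$), and the clutching computation over the double identifying $w^{(j)}$ with the mod-$2$ winding number of the gluing map along $\calC_j$ and $d$ with the total winding number. The computation that the fixed locus of $v\mapsto c(\theta)\ov{v}$ is the M\"obius bundle exactly when $c$ has odd winding number is correct and is indeed the crux of the congruence.

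Two points need repair before this is a proof. First, in the case $\Ms=\emptyset$ the identity $d=\langle p^*e(V),[M]\rangle=2\langle e(V),[S]\rangle$ does not parse as written: $S$ is then a closed non-orientable surface, so it carries no integral fundamental class, and $e(V)$ is not an integral class unless $V$ is orientable. The standard fix is to work modulo $2$: $d\equiv\langle w_2(p^*V),[M]\rangle\equiv\langle w_2(V),p_*[M]\rangle\equiv\langle w_2(V),2[S]\rangle\equiv 0$, using the mod-$2$ homology transfer of the double cover. Second, your sketch handles existence and the parity constraint but compresses completeness of the invariants --- that two real Hermitian bundles with the same $r$, $d$ and $(w^{(1)},\dots,w^{(k)})$ are isomorphic \emph{as real bundles}, not merely as complex bundles with abstractly isomorphic boundary reductions --- into one sentence of obstruction theory. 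That step requires either a genuinely equivariant obstruction argument or the explicit normal forms of \cite{BHH}, and it is where most of the work in the cited proof actually lies.
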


\noindent The choice of a real structure $\sigt$ on $(E,h)$ induces real structures on the complex vector space of smooth global sections of $E$,

\begin{eqnarray*}
\sectionsofE & \longrightarrow &  \sectionsofE \\
s & \longmapsto & \ov{s}: x \mapsto \ov{s(\ov{x})},
\end{eqnarray*}

\noindent (the fixed-points of which are the real sections of $E$, defined in section \ref{intro}) and, more generally, on the space of $E$-valued $k$-forms on $M$,

\begin{eqnarray*}
\kformsofE & \longrightarrow &  \kformsofE \\
\eta & \longmapsto & \ov{\eta}: v\mapsto \ov{\eta_{\ov{x}} (\ov{v})}.
\end{eqnarray*}

\begin{definition}[Real unitary connections]\label{real_unitary_connection_def}
A unitary connection $$d_A : \sectionsofE \longrightarrow \oneformsofE$$ is called \textbf{real} if it commutes to the real structures of $\sectionsofE$ and $\oneformsofE$: $$d_A\ov{s} = \ov{d_A s}\ \mathrm{for\ all\ } s\in \sectionsofE.$$ 
\end{definition}

\noindent A similar definition is possible on Dolbeault operators: the complex vector space $\zerooneforms$ is invariant under the real structure of $\oneformsEzero$, and a Dolbeault operator $$\db_B: \sectionsofE \longrightarrow \zerooneforms$$ is called real if it commutes to the real structures. This definition is compatible with the isomorphism between $\Dol(E)$ and $\conn$ in the sense that $d_{A}^{\, †0,1}$ is real if, and only if, $d_A$ is real. In particular, $\ker d_A^{\, 0,1}$ has an induced real structure, and the holomorphic bundle $(E,d_A^{\, 0,1})$ is a real holomorphic bundle in the sense of Atiyah. Conversely, the Dolbeault operator of a real holomorphic bundle commutes to the real structures of $\sectionsofE$ and $\zerooneforms$, so the unitary connection associated to that operator is a real unitary connection in the sense of definition \ref{real_unitary_connection_def}.\\ We now come to the construction of the involution $\asigt$ of $\conn$. To define it, we make use of the canonical isomorphism $$\phi:\os{E} \overset{\simeq}{\longrightarrow} E$$ determined by $\sigt$, and define $\asigt(A)$ to be the unitary connection on $E$ such that, for all $s\in\sectionsofE$, $$d_{\asigt(A)} s = \phi \big(d_{\os{A}} (\phi^{-1} s)\big).$$ To make this more explicit, let us choose a unitary $1$-cocycle $$(\gttp:\Uttp \to \U_r)_{\tau,\tau'}$$ representing $E$ and subordinate to a covering $(\Ut)_{\tau\in T}$ by open sets which satisfy $\sig(\Ut)=\Ut$. Then $\phi$ may be represented by a $0$-cocycle $(\Lt:\Ut\to\U_r)_{\tau\in T}$ such that $$\Lt \os{\gttp}\Lt^{-1} = \gttp \quad \mathrm{and}\quad \os{\Lt}=\Lt^{-1}.$$ The first condition translates the fact that $\phi$ is an isomorphism between $\os{E}$ and $E$, and the second condition translates the fact that $\sigt^2 = \Id_E$. It can then easily be checked that, if $(\At: \Ut \to \antiHermonematrices)_{\tau\in T}$ is a framed unitary connection on $E$, then so is $$\big(\Lt\os{\At}\Lt^{-1} - (d\Lt)\Lt^{-1}\big)_{\tau\in T},$$ and the connection it defines is $\asigt(A)$. In the following, we shall simply denote
$$\asigt: \begin{array}{rcl} \conn & \longrightarrow & \conn \\ A & \longmapsto & \ov{A} \end{array}.$$ This suggestive notation is justified by the following result.

\begin{proposition}\label{charac_real_connections}
The unitary connection $$d_A: \sectionsofE \longrightarrow \oneformsofE$$ is real if, and only if, $\ov{A}=A.$
\end{proposition}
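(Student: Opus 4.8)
The plan is to collapse the entire statement onto a single operator identity and then read off the equivalence formally. Write $\gamma$ for the $\C$-antilinear involution $s\mapsto\ov{s}$ on $\sectionsofE$, and also for the corresponding involution $\eta\mapsto\ov{\eta}$ on $\oneformsofE$; both square to $\Id$. In this language, \emph{$d_A$ is real} means exactly $d_A\circ\gamma=\gamma\circ d_A$ (as maps $\sectionsofE\to\oneformsofE$, the left $\gamma$ acting on $0$-forms and the right one on $1$-forms). The claim I would isolate is the identity
\[
\gamma\circ d_A\circ\gamma = d_{\asigt(A)},\qquad A\in\conn.
\]
Granting it, the proposition is immediate: if $d_A$ is real, then $\gamma d_A\gamma=\gamma(\gamma d_A)=d_A$, so $d_{\asigt(A)}=d_A$ and hence $\asigt(A)=A$, since $A\mapsto d_A$ is injective (the connection is recovered from its covariant derivative); conversely, if $\asigt(A)=A$ then $\gamma d_A\gamma=d_A$, and left-multiplying by $\gamma$ and using $\gamma^2=\Id$ gives $d_A\gamma=\gamma d_A$, i.e. $d_A$ is real.

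To prove the identity I would first record its conceptual content and then verify it in frames. The isomorphism $\phi:\os{E}\overset{\simeq}{\longrightarrow}E$ determined by $\sigt$ satisfies $\phi(\widehat{s})=\ov{s}$, where $\widehat{s}$ is the section of $\os{E}$ canonically attached to $s$ (as a set map, $\widehat{s}(x)=s(\sigma x)$); thus $\gamma=\phi\circ\widehat{(\cdot)}$, and since by definition $d_{\asigt(A)}=\phi\circ d_{\os{A}}\circ\phi^{-1}$, the identity is really the statement that $d_{\os{A}}$ is compatible with the conjugation $\widehat{(\cdot)}$. I would then compute directly in the unitary cocycle $(\Ut,\gttp)$ and the $0$-cocycle $(\Lt)$ representing $\phi$. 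The local form of $\gamma$ is $(\ov{s})_\tau=\Lt\,\os{s_\tau}$ on sections and $(\ov{\eta})_\tau=\Lt\,\os{\eta_\tau}$ on $E$-valued $1$-forms. I would expand $(\gamma d_A\gamma\,s)_\tau$ by writing $d_A(\ov{s})$ locally as $d(\ov{s})_\tau+\At(\ov{s})_\tau$ and conjugating, feed in $(\ov{s})_\tau=\Lt\,\os{s_\tau}$, apply $\sigma^*$ and the Leibniz rule, and simplify using $\os{\Lt}=\Lt^{-1}$ (whence $\ov{\sigma^*\Lt}=\Lt^{-1}$ and $d\,\ov{\sigma^*\Lt}=-\Lt^{-1}(d\Lt)\Lt^{-1}$); everything then collapses to $ds_\tau+\big(\Lt\,\os{\At}\,\Lt^{-1}-(d\Lt)\Lt^{-1}\big)s_\tau$, which is precisely $(d_{\asigt(A)}s)_\tau$ by the local formula for $\asigt$ recalled just before the proposition.

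The only genuine work is this local computation, and the point to watch is the interaction of three ingredients: the antilinearity of $\gamma$ (bars must be tracked through every scalar and matrix factor), the relation $\sigma^2=\Id$ (used when re-evaluating $s_\tau\circ\sigma$ at $\sigma x$), and above all the normalisation $\os{\Lt}=\Lt^{-1}$ coming from $\sigt^2=\Id$. It is exactly this last relation that makes the inhomogeneous term $(d\Lt)\Lt^{-1}$ of $\asigt(A)$ appear with the correct sign and cancel against the derivative of $\Lt$ produced by the Leibniz rule; without it the two operators would differ by an exact $\gl(E)$-valued term. I expect no conceptual obstacle beyond keeping these conventions consistent, so the proposition should follow cleanly once the identity above is established.
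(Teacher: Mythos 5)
Your proof is correct and follows essentially the same route as the paper: the paper's entire argument is the key identity $d_{\ov{A}}\,s=\ov{d_A\ov{s}}$ (your $\gamma\circ d_A\circ\gamma=d_{\asigt(A)}$) followed by the same formal manipulation with $\gamma^2=\Id$. Your local-frame verification of that identity is exactly the alternative justification the paper itself mentions ("the result may also be proved by computing in a local frame"), so it is a welcome elaboration rather than a different approach.
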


\begin{proof}
The key observation is that $$d_{\ov{A}}\, s = \ov{d_A \ov{s}},$$ which follows from the definition of $d_{\ov{A}}$. Since, by definition, the connection $d_A$ is real if, and only if, $\ov{d_A \ov{s}}= d_A s$ for all $s$, the proposition is proved.
\end{proof}

\noindent The result may also be proved by computing in a local frame of the type described above. If we choose a different real Hermitian structure $\sigt'$ on $(E,h)$, we obtain a new involution $\alpha_{\sigt'}$, whose fixed points also define real holomorphic bundles, but with possibly different real invariants. Thus, letting $\sigt$ run through the set of all possible real Hermitian structures, we obtain, as fixed-point sets of the various involutions thus defined, the unitary connections that define all possible real holomorphic bundles of rank $r$ and degree $d$. As a matter of fact, it suffices to let $\sigt$ run through the set of all topological types of real Hermitian structures of rank $r$ and degree $d$, and proposition \ref{real_invariants} shows that these come in finite number.

\subsection{Properties of the involutions}

We now study the properties of $\asigt$ for a fixed $\sigt$. We start by observing that the choice of $\sigt$ induces an involution of the unitary gauge group $\unitarygaugegp$, and of the vector space $\antiHermtwoforms$, which may be viewed as the dual as the dual of the Lie algebra of the unitary gauge group. These involutions are

\begin{eqnarray*}
\unitarygaugegp & \longrightarrow & \unitarygaugegp \\
u & \longmapsto & \phi\, \os{u}\, \phi^{-1}
\end{eqnarray*}

\noindent and

\begin{eqnarray*}
\antiHermtwoforms & \longrightarrow & \antiHermtwoforms \\
R & \longmapsto & \phi\, \os{R}\, \phi^{-1}
\end{eqnarray*}

\noindent We shall simply denote them $u\mapsto \ov{u}$ and $R\mapsto\ov{R}$.

\begin{proposition}
One has the following compatibility relations:
\begin{enumerate}
\item between $\asigt$ and the gauge action: $$\ov{u(A)} = \ov{u}(\ov{A}).$$
\item between $\asigt$ and the momentum map of the gauge action: $$F_{\ov{A}} = \ov{F_A}.$$
\end{enumerate}
\noindent Moreover, $\asigt$ is an anti-symplectic involutive isometry of $\conn$.
\end{proposition}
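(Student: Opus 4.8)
The plan is to exploit the fact that all four involutions entering the statement --- $\asigt$ on $\conn$, $u\mapsto\ov u$ on $\unitarygaugegp$, $R\mapsto\ov R$ on $\twoforms$, and the conjugation $a\mapsto\ov a$ on the model space $\oneforms$ of tangent vectors --- are one and the same operation ``conjugate, pull back by $\sig$, and transport by $\phi$'', read off in different settings. Concretely $\asigt(A)=\phi_*(\os A)$ is the push-forward under $\phi:\os E\to E$ of the conjugate-pullback connection $\os A=\ov{\sig^*A}$, and the three remaining involutions are $\phi_*\circ\os{(\cdot)}$ applied respectively to gauge transformations, to curvature-type $2$-forms, and to $\fraku(E,h)$-valued $1$-forms. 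Since each is built from transport of structure by genuine bundle maps, the two compatibility relations should fall out formally, the only real work being the sign bookkeeping in the final assertion. Throughout I would lean on the identity $d_{\ov A}\,s=\ov{d_A\ov s}$ already isolated in the proof of Proposition \ref{charac_real_connections}.

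For the first relation I would combine that key identity with the gauge formula $d_{u(A)}s=u\big(d_A(u^{-1}s)\big)$ and the fiberwise fact that the section real structure is multiplicative over the gauge group, $\ov{u\cdot s}=\ov u\cdot\ov s$ (a one-line check from $\sigt^2=\Id_E$, valid on $E$-valued forms as well). The chain
\[
d_{\ov{u(A)}}\,s=\ov{d_{u(A)}\,\ov s}=\ov{u\big(d_A(u^{-1}\ov s)\big)}=\ov u\,\ov{d_A(u^{-1}\ov s)}=\ov u\big(d_{\ov A}((\ov u)^{-1}s)\big)=d_{\ov u(\ov A)}\,s
\]
then gives $\ov{u(A)}=\ov u(\ov A)$, the fourth equality using multiplicativity and the fifth the key identity (with $t=(\ov u)^{-1}s$, so that $\ov t=u^{-1}\ov s$). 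For the second relation the cleanest route is naturality of curvature: $F$ commutes with push-forward by the isomorphism $\phi$, with pullback (as $F_{\sig^*A}=\sig^*F_A$), and with fiberwise complex conjugation (locally $d\,\ov{C_{\tau}}+\ov{C_{\tau}}\wedge\ov{C_{\tau}}=\ov{d C_{\tau}+C_{\tau}\wedge C_{\tau}}$, since $d$ and $\wedge$ are real). Composing these three facts along $\asigt=\phi_*\circ\os{(\cdot)}$ yields $F_{\ov A}=\phi_*\,\os{F_A}=\ov{F_A}$. Involutivity $\ov{\ov A}=A$ is immediate from $\sigt^2=\Id_E$ and $\sig^2=\Id_M$, which make $\os{(\cdot)}$, hence $\asigt$, of order two.

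It remains to see that $\asigt$ is an anti-symplectic isometry. The tangent space at any $A$ is $\oneforms$, the form $\w_A(a,b)=\int_M-\tr(a\wedge b)$ is translation-invariant, and the differential of the affine map $\asigt$ is the conjugation $a\mapsto\ov a$. The computational heart is the pointwise identity $\tr(\ov a\wedge\ov b)=\sig^*\tr(a\wedge b)$: conjugation by $\phi$ is killed by the trace, $\tr(\ov X)=\ov{\tr X}$ and conjugation commutes with $\wedge$, and $-\tr(a\wedge b)$ is the \emph{real}-valued $2$-form already used to define $\w$. Since $\sig$ is orientation-reversing ($\sig^*\vol_M=-\vol_M$), one has $\int_M\sig^*\eta=-\int_M\eta$ for top forms, whence $\w(\ov a,\ov b)=-\w(a,b)$: anti-symplectic. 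For the induced $L^2$-metric $\langle a,b\rangle=\int_M-\tr(a\wedge *b)$ the same computation applies verbatim, except that $\sig$, being an orientation-reversing \emph{isometry}, satisfies $\sig^{*}\circ *=-\,*\circ\,\sig^{*}$ on $1$-forms, contributing a second sign; the two signs now cancel and $\langle\ov a,\ov b\rangle=\langle a,b\rangle$, so $\asigt$ is an isometry.

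The step I expect to demand the most care is precisely this last sign accounting: checking that the symplectic form picks up a single (odd) sign while the metric picks up two (even), and pinpointing where each hypothesis on $\sig$ is used --- orientation reversal for the integral, and the isometry property for the interaction $\sig^{*}\circ *=-\,*\circ\,\sig^{*}$ of pullback with the Hodge star. The supporting ingredients (reality of $-\tr(a\wedge b)$, and the description $\asigt=\phi_*\circ\os{(\cdot)}$ together with its operator identity) I would nail down first, after which everything else is transport of structure.
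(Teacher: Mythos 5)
Your proposal is correct and follows essentially the same route as the paper: the compatibility with the gauge action and with the curvature both reduce to the operator identity $d_{\ov{A}}\,s=\ov{d_A\ov{s}}$ (the paper writes the gauge computation additively as $\ov{A+(d_Au)u^{-1}}=\ov{A}+(d_{\ov{A}}\ov{u})\ov{u}^{-1}$ and the curvature one as $F_{\ov{A}}s=d_{\ov{A}}d_{\ov{A}}s=\ov{d_Ad_A\ov{s}}$, but these are the same manipulations you perform), and anti-symplecticity comes from the same pointwise identity $-\tr(\ov{a}\wedge\ov{b})=\sig^*\bigl(-\tr(a\wedge b)\bigr)$ combined with $\sig^*\vol_M=-\vol_M$. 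The one place you genuinely diverge is the isometry property: the paper observes that the tangent map $a\mapsto\ov{a}$ is $\C$-antilinear for the complex structure given by the Hodge star, so that anti-symplectic plus anti-holomorphic forces isometric for the K\"ahler metric, whereas you compute the $L^2$ pairing $\int_M-\tr(a\wedge *b)$ directly and let the two signs (one from orientation reversal in the integral, one from $\sig^*\circ *=-\,*\circ\sig^*$ on $1$-forms) cancel. The two arguments are equivalent in content --- your explicit sign $\sig^*\circ *=-\,*\circ\sig^*$ is exactly the paper's statement that the differential of $\asigt$ anticommutes with the complex structure --- but the paper's formulation has the small additional payoff of recording that $\asigt$ is anti-holomorphic, which is used elsewhere to see that $\Fix(\bs)$ is totally real.
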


\noindent Note that $\ov{F_A}$ is, by definition, the operator from $\sectionsofE$ to $\twoformsofE$ that takes a section $s$ to the $2$-form $\ov{F_A\ov{s}}$. We could have defined the operator $\ov{d_A}$ in a similar way, and proposition \ref{charac_real_connections} then shows that $d_A$ is real if, and only if, $\ov{d_A} = d_A$.
\begin{proof}
First, as $\os{\phi} = \phi^{-1}$, $\asigt$ squares to the identity. Second, observe that there is an involution

$$\begin{array}{rcl}
\antiHermoneforms & \longrightarrow & \antiHermoneforms \\
a & \longmapsto & \ov{a} := \phi\, \os{a}\, \phi^{-1}
\end{array}.$$

\noindent Then, for all $A\in\conn$ and all $a,b\in T_A \conn = \antiHermoneforms$,

\begin{eqnarray*}
(\asigt^*\w)_A (a,b) & = & \int_M -\tr(\ov{a} \wedge \ov{b}) \\
& = & \int_M \sig^*\big(-\tr(a \wedge b)\big) \\
& = & -\int_M -\tr(a \wedge b)\\
& = & - \w_A (a,b)
\end{eqnarray*}

\noindent As a consequence, to show that $\asigt$ is an isometry, it suffices to show that $\asigt$ is $\C$-antilinear with respect to the compatible complex structure of $\conn$, which is given by the Hodge star. This last point follows from the fact that the tangent map to $\asigt$ is $$a \mapsto \ov{a}\ \mathrm{on}\ \antiHermoneforms,$$ which is clearly $\C$-antilinear. Finally, let us prove that we have the asserted compatibility relations.
\begin{enumerate}
\item One has $$\ov{u(A)} = \ov{A + (d_A u)u^{-1}} = \ov{A} + (\ov{d_A u})\ov{u^{-1}} = \ov{A} + (d_{\ov{A}} \ov{u}) \ov{u}^{-1} = \ov{u}(\ov{A}).$$
\item Similarly, for all $s\in\sectionsofE$, $$ F_{\ov{A}} s = d_{\ov{A}} (d_{\ov{A}} s) = d_{\ov{A}} \ov{(d_{A} \ov{s})} = \ov{d_{A} (d_A \ov{s})} = \ov{F_A \ov{s}},$$ hence $F_{\ov{A}} = \ov{F_A}$.
\end{enumerate}
\end{proof}

\subsection{Embedding the real quotients}

For a given $\sigt$ on $(E,h)$, the elements of $\Fix(\asigt)$ are the connections that define the real holomorphic structures on $E$ with real invariants those determined by $\sigt$. Let us denote $\realgaugegp$ the subgroup of $\unitarygaugegp$ consisting of fixed points of the automorphism $u \mapsto \ov{u}$, and call it the \textbf{real gauge group}. Note that it is precisely the group of automorphisms of the real Hermitian bundle $(E,h,\sigt)$. The compatibility relation $\ov{u(A)} = \ov{u}(\ov{A})$ shows that the real gauge group acts on the space $\Fix(\asigt)$ of real connections. We then observe that $*i2\pi\frac{d}{r}\Id_E$ is fixed under the involution $R\mapsto \ov{R}$ of $\antiHermtwoforms$. Indeed,
\begin{eqnarray*}
\phi\, \os{(i2\pi\frac{d}{r}\Id_E \vol_M)}\, \phi^{-1} & = & \left( \phi (-i2\pi\frac{d}{r} \Id_{\os{E}}) \phi^{-1} \right) \big(\sigma^* \vol_M\big) \\
& = &  (- i2\pi\frac{d}{r}\Id_E)\, (- \vol_M) \\
& = & i2\pi\frac{d}{r}\Id_E \vol_M,
\end{eqnarray*} and so $\fibre$ is invariant under $\asigt$. Consequently, the following quotient, $$\calL=(\fibre)^{\asigt} / \realgaugegp,$$ is a well-defined object. By Donaldson's theorem and proposition \ref{charac_real_connections}, its elements are real gauge equivalences classes of holomorphic bundles of rank $r$ and degree $d$ which are both polystable and real. The compatibility of $\asigt$ with the gauge action and the momentum map of that action also shows that $\asigt$ induces an involution of the quotient $$\Mod = \fibre / \unitarygaugegp,$$ sending the gauge equivalence class of a connection $A$ to that of $\ov{A}$. In other words, the involution induced by $\asigt$ is $$\bs: [\gr(\calE)] \mapsto [\gr(\os{\calE})],$$ \textit{regardless of the choice of the Hermitian real structure} $\sigt$ \textit{on} $(E,h)$. In particular, $\bs$ is an anti-symplectic, involutive isometry of $\Mod$. Evidently, there is a map $$\calL \longrightarrow \calN \subset \Fix(\bs),$$ of which we can think as the map which forgets the real structure of a holomorphic bundle which is both polystable and real. Observe that the real quotient $\calL$ has dimension half the dimension of the K\"ahler quotient $\Mod$, and that so does $\Fix(\bs)$. Therefore, to prove that $\calN$ is a union of connected components of $\Fix(\bs)$, it suffices to prove that the above mapping is a closed embedding of $\calL$ into $\calN$.

\begin{proposition}
Let $A$ and $A'$ be two real, irreducible, Yang-Mills connections which lie in the same $\unitarygaugegp$-orbit. Then they lie in the same $\realgaugegp$-orbit.
\end{proposition}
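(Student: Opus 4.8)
The plan is to play the involution $A\mapsto\ov{A}$ against the compatibility relation $\ov{u(A)}=\ov{u}(\ov{A})$ established above, letting irreducibility pin down the stabiliser of a Yang--Mills connection. First I would choose $u\in\unitarygaugegp$ with $u(A)=A'$ and apply the bar. Since $A$ and $A'$ are both real, proposition \ref{charac_real_connections} gives $\ov{A}=A$ and $\ov{A'}=A'$, so that $\ov{u}(A)=\ov{u}(\ov{A})=\ov{u(A)}=\ov{A'}=A'=u(A)$. Consequently $g:=u^{-1}\ov{u}$ fixes $A$, i.e.\ $g$ lies in the stabiliser of $A$ in $\unitarygaugegp$.

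The next step is to identify that stabiliser. The stabiliser consists of the unitary automorphisms $u$ with $d_A u=0$, that is, the parallel unitary automorphisms of $(E,h)$. As $A$ is irreducible, the commutant of its holonomy is reduced to scalars, so the only parallel endomorphisms of $E$ are the constant scalars $\C\cdot\Id_E$; among these the unitary (hence parallel, hence constant) ones are exactly $\{e^{i\theta}\Id_E:\theta\in\R\}$. Therefore $g=e^{i\theta}\Id_E$ for some $\theta\in\R$, and, the scalar being central, $\ov{u}=ug=e^{i\theta}u$.

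Finally, I would correct $u$ by a square root of $g$. Two facts make this work: because $\sigt$ is $\C$-antilinear, the involution sends a constant scalar $e^{i\theta}\Id_E$ to $e^{-i\theta}\Id_E$; and a constant scalar acts trivially on $\conn$, since then $(d_A u)u^{-1}=0$. Setting $v:=e^{i\theta/2}u$, one computes $\ov{v}=e^{-i\theta/2}\,\ov{u}=e^{-i\theta/2}e^{i\theta}u=e^{i\theta/2}u=v$, so $v\in\realgaugegp$, while $v(A)=u(A)=A'$ because $e^{i\theta/2}\Id_E$ fixes every connection. Hence $A$ and $A'$ lie in the same $\realgaugegp$-orbit, as claimed.

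I expect the one genuinely substantive point to be the identification of the stabiliser of $A$ with the constant unitary scalars, which is exactly where the irreducibility hypothesis enters (via Schur's lemma / simplicity of the underlying stable bundle). Everything else is a formal manipulation of the involution, relying only on its being a group homomorphism that conjugates $e^{i\theta}\Id_E$ to $e^{-i\theta}\Id_E$ and on central constant scalars acting trivially on connections, so no delicate estimate or analytic input is needed.
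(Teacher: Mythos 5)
Your proof is correct and follows essentially the same route as the paper's: apply the involution to $u(A)=A'$, use reality of $A$ and $A'$ together with the compatibility $\ov{u(A)}=\ov{u}(\ov{A})$ to conclude that $u^{-1}\ov{u}$ stabilises $A$, invoke irreducibility (via Donaldson's theorem) to identify that element with a scalar $e^{i\theta}$, and then correct $u$ by $e^{i\theta/2}$ to land in $\realgaugegp$. The only difference is that you spell out why the stabiliser is $S^1$ and why the involution conjugates scalars, details the paper leaves implicit.
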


\noindent We recall that an irreducible connection is a unitary connection that defines an \textit{indecomposable} holomorphic structure on $(E,h)$. This is equivalent to asking that its stabiliser, \textit{in} $\cxgaugegp$, should be isomorphic to $\C^*$.

\begin{proof}
The proof is similar to that of proposition \ref{real_moduli}. Assume that $A'=u(A)$ for some $u\in\unitarygaugegp$. Then $$u(A)=A'=\ov{A'}=\ov{u(A)}=\ov{u}(\ov{A})=\ov{u} (A).$$ As $A$ is both Yang-Mills and irreducible, its stabiliser is, by Donaldson's theorem, contained in $S^1$. This implies that $u^{-1}\ov{u} = e^{i\theta}$ for some $\theta\in\R$. Put $v=e^{i\frac{\theta}{2}} u$. Then $v(A)=u(A)=A'$, and $\ov{v}=e^{-i\frac{\theta}{2}} \ov{u} = e^{-i\frac{\theta}{2}} e^{i\theta} u = e^{i\frac{\theta}{2}} u = v$, so $v\in\realgaugegp$.
\end{proof}

\begin{corollary}
The map $$\calL\longrightarrow\calN,$$ which takes the real gauge orbit of a real Yang-Mills connection to its unitary gauge orbit, is injective.
\end{corollary}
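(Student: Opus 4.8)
The plan is to unwind what injectivity means and reduce it to the irreducible case already settled. Two points of $\calL$ have the same image in $\calN$ precisely when the two real Yang--Mills connections $A$ and $A'$ representing them lie in the same $\unitarygaugegp$-orbit; I must then produce a \emph{real} gauge transformation carrying $A$ to $A'$. So suppose $A' = u(A)$ for some $u\in\unitarygaugegp$. Applying the involution and using that $A,A'$ are real together with the compatibility relation $\ov{u(A)}=\ov{u}(\ov{A})$, one gets $A' = \ov{u}(A)$, hence $w:=u^{-1}\ov{u}$ fixes $A$, i.e. $w\in\mathrm{Stab}_{\unitarygaugegp}(A)$. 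A direct check shows the involution $v\mapsto\ov{v}$ preserves this stabiliser and that $\ov{w}=w^{-1}$. If I can solve $w = s\,\ov{s}^{-1}$ with $s\in\mathrm{Stab}_{\unitarygaugegp}(A)$, then $v:=us$ is real ($\ov v = v$) and satisfies $v(A)=u(s(A))=u(A)=A'$, as desired.

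When $A$ is irreducible this is exactly the preceding proposition: the stabiliser is $S^1$, $w=e^{i\theta}$, and $s=e^{i\theta/2}$ works. In general $A$ is only polystable, so by Donaldson's theorem $\mathrm{Stab}_{\unitarygaugegp}(A)$ is the group of unitary automorphisms of the polystable bundle $\calE=(E,\db_A)$, a product $\prod_i \U(m_i)$ indexed by the distinct stable constituents $\calE_i$ (with multiplicities $m_i$); the involution $v\mapsto\ov v$ acts on this product compatibly with the way $\calE\mapsto\os{\calE}$ permutes the $\calE_i$. Solving $w=s\,\ov s^{-1}$ is thus the vanishing of the twisted nonabelian $H^1(\Z/2\Z;\mathrm{Stab}_{\unitarygaugegp}(A))$ attached to this involution.

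I expect this vanishing to be the crux. The cleanest way to get it is to pass first to the complex stabiliser $\mathrm{Stab}_{\cxgaugegp}(A)=\Aut(\calE)=\prod_i\GL_{m_i}(\C)$, where the averaging trick $s_0=t+w\,\ov t$ (for generic $t$, so that $s_0$ is invertible) produces $s_0$ with $w=s_0\,\ov{s_0}^{-1}$; then $v_0:=u s_0\in\cxgaugegp$ is real and satisfies $v_0(A)=A'$. The remaining, and genuinely delicate, point is that $v_0$ is only a \emph{complex} gauge transformation, whereas I need a \emph{unitary} one. Here I would invoke the uniqueness clause of Donaldson's theorem in its Kempf--Ness form: writing the polar decomposition $v_0=v\,e^{\xi}$ with $v$ unitary and $\xi$ Hermitian, the fact that $A$ and $v_0(A)=A'$ are both Yang--Mills forces the positive factor $e^{\xi}$ to lie in $\mathrm{Stab}(A)$ (convexity of the Yang--Mills functional along the geodesic $t\mapsto e^{t\xi}(A)$), so $v(A)=A'$; and since $v\mapsto\ov v$ is an isometric involution it commutes with the polar decomposition of the real element $v_0$, giving $\ov v = v$, i.e. $v\in\realgaugegp$. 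This yields the required real gauge transformation and proves injectivity. Alternatively one can argue entirely inside the compact stabiliser, reducing the self-conjugate factors to the Autonne--Takagi factorisation of symmetric unitary matrices and the swapped pairs of factors to Shapiro's lemma; but the complex-gauge-plus-polar-decomposition route is shorter and stays within the Atiyah--Bott--Donaldson framework used throughout.
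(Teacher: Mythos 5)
Your argument is correct, but it takes a genuinely different (and considerably heavier) route than the paper's. The paper's proof is two lines: it decomposes a real Yang--Mills connection into its irreducible Yang--Mills summands, asserts that these summands are again real, and applies the preceding proposition to each one. You instead keep the polystable connection whole, identify the obstruction to promoting $u\in\unitarygaugegp$ to a real gauge transformation as a class in the nonabelian $H^1\big(\Z/2\Z;\mathrm{Stab}(A)\big)$, kill it in the complex stabiliser $\prod_i\GL_{m_i}(\C)$ by the Hilbert~90 averaging trick, and then descend to $\unitarygaugegp$ by polar decomposition combined with the uniqueness clause of Donaldson's theorem (the Kempf--Ness convexity argument). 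All of these steps check out: $w=u^{-1}\ov{u}$ is indeed a cocycle; $s_0=t+w\ov{t}$ is invertible for some $t$ because the image of $t\mapsto t+w\ov{t}$ is a real form of the semisimple algebra $\End(\calE)$ and a real form cannot be contained in the hypersurface of non-units; and the involution $g\mapsto\ov{g}$ preserves adjoints (since $\sigt$ is an isometry), hence commutes with polar decomposition, so the unitary factor of the real element $v_0$ is itself real. What your approach buys is precision on a point the paper's reduction elides: when $\calE=\calF\oplus\os{\calF}$ with $\calF$ stable and $\calF\not\simeq\os{\calF}$, the real structure permutes the stable summands and none of them is individually real, so the assertion that ``the irreducible components of a real connection are real'' does not literally apply; your cohomological formulation (Shapiro's lemma for the swapped factors, Hilbert~90 or Autonne--Takagi for the self-conjugate ones) absorbs this case automatically. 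The price is the extra analytic input from Kempf--Ness, which the paper's componentwise argument avoids entirely.
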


\begin{proof}
Recall that a Yang-Mills connection is an element of $\fibre$, and that it is a direct sum of irreducible Yang-Mills connections. If the connection is real, then so are its irreducible components and the corollary follows by applying the proposition to those irreducible components.
\end{proof}

\noindent We may now state the conclusion to this paper.

\begin{theorem}\label{lag_submanifold}
The set $$\calN = \{ [\gr(\calE)]\in\Mod : \calE\ \mathrm{is\ semistable\ and\ real}\}$$ of moduli of holomorphic bundles of rank $r$ and degree $d$ which are both semistable and real is a totally real, totally geodesic, Lagrangian submanifold of $\Mod$.
\end{theorem}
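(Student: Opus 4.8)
The plan is to reduce the statement to two facts, both of which the groundwork of this section has nearly put in place. The first is the general principle that the fixed-point set of an anti-symplectic, involutive isometry of a Kähler manifold is automatically a totally real, totally geodesic, Lagrangian submanifold. The second is that $\calN$ is a union of connected components of $\Fix(\bs)$, where $\bs$ has already been shown to be an anti-symplectic involutive isometry of $\Mod$. Granting both, $\calN$ inherits the three desired properties from $\Fix(\bs)$. Throughout I argue at a smooth (stable) point, and in the non-coprime case stratum by stratum, using the stratified symplectic structure mentioned in the introduction.

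For the first fact, being the fixed locus of an isometric involution, $\Fix(\bs)$ is totally geodesic, its tangent space at a fixed point being the $(+1)$-eigenspace $V$ of the tangent map $d\bs$. On a Kähler manifold the metric, complex structure and symplectic form satisfy $\w(\cdot,\cdot)=g(J\cdot,\cdot)$; combining $\bs^*g=g$ with $\bs^*\w=-\w$ forces $d\bs\circ J=-J\circ d\bs$, so $\bs$ is anti-holomorphic, consistently with the earlier observation that its tangent map sends $\nu$ to $\os{\nu}$ on $H^1(M;\End(\calE))$. Anti-commutation with $J$ sends $V$ into the $(-1)$-eigenspace, whence $V\cap JV=0$: this is exactly the totally real condition, and it also shows that $J$ identifies the two eigenspaces, so $V$ is half-dimensional. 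Finally, for $u,v\in V$ one computes $\w(u,v)=\w(d\bs\,u,d\bs\,v)=(\bs^*\w)(u,v)=-\w(u,v)$, so $V$ is isotropic; being half-dimensional, it is Lagrangian.

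For the second fact I would use the finitely many real quotients. By Proposition \ref{real_invariants} there are only finitely many topological types of real Hermitian structure $\sigt$ of rank $r$ and degree $d$ on $(E,h)$; for each such $\sigt$ we have the real quotient $\calL$, and the preceding corollary shows that the forgetful map $\calL\to\calN\subset\Fix(\bs)$ is injective. Every point of $\calN$ arises this way, since the polystable representative of $[\gr(\calE)]$ carries an induced real structure of one of these types and is defined by an $\asigt$-fixed Yang-Mills connection; hence $\calN$ is the finite union of the images of these maps. Because $\dim_\R\calL=\tfrac12\dim_\R\Mod=\dim_\R\Fix(\bs)$, an injective immersion $\calL\to\Fix(\bs)$ would automatically be a local diffeomorphism onto an open set, and closedness of the image would then force it to be a union of connected components. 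The main obstacle is therefore to establish that each such map is a closed embedding. For the immersion property I would linearise using the slice theorem for the Kähler (or stratified symplectic) quotient near a polystable orbit, checking via the compatibility relations $\ov{u(A)}=\ov{u}(\ov{A})$ and $F_{\ov{A}}=\ov{F_A}$ that real reduction commutes with passing to the $(+1)$-eigenspace of $d\bs$, which identifies the tangent space of $\calL$ with that of $\Fix(\bs)$. Closedness of the image would follow from properness, itself a consequence of the compactness of $\Mod$ together with the fact that the real gauge group $\realgaugegp$ acts on the closed set of real Yang-Mills connections with closed orbits. With these in hand, $\calN$ is a union of connected components of $\Fix(\bs)$, and hence is totally real, totally geodesic and Lagrangian.
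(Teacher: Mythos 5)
Your proposal is correct and follows essentially the same route as the paper: establish that $\bs$ is an anti-symplectic involutive isometry of $\Mod$ induced by the involutions $\asigt$ of $\conn$, show via the finitely many real quotients $\calL$ and the injectivity of $\calL\to\calN$ (together with the half-dimension count) that $\calN$ is a union of connected components of $\Fix(\bs)$, and invoke the standard fact that the fixed locus of such an involution is totally real, totally geodesic and Lagrangian. You spell out that last Kähler-geometric lemma and flag the closed-embedding step more explicitly than the paper does, but the architecture of the argument is the same.
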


\begin{ack}
This research was carried out at the University of Los Andes in Bogot\'a, and at the IHES in Bures-sur-Yvette, over the second half of 2009. I thank both these institutions for their hospitality. I would also like to thank Olivier Guichard, Nan Kuo Ho, Melissa Liu, and Richard Wentworth, for discussing the constructions of the paper with me and for asking challenging questions. Finally, I thank Thomas Baird for his comments on an early version, and the referee for comments which have helped improve the general presentation of the paper.
\end{ack}


\end{document}